\documentclass[12pt]{article}
\usepackage{amssymb}

\usepackage{amsmath}
\usepackage{amsthm}
\usepackage{tikz}
\usepackage{amscd}
\usepackage{latexsym}
\usepackage{mathrsfs}
\usepackage{color}
\usepackage{graphicx}
\usepackage[noadjust]{cite}
\usepackage[all,pdf]{xy}
\usepackage{enumerate}
\usepackage{doi}
\usepackage{enumitem}


\setlength{\oddsidemargin}{-0.1cm}
\setlength{\evensidemargin}{-0.1cm}
\setlength{\topmargin}{-1.0cm}
\setlength{\parindent}{12pt}
\setlength{\parskip}{3ptplus1ptminus2pt}
\setlength{\baselineskip}{20pt plus2pt minus1pt}
\setlength{\textheight}{23true cm}
\setlength{\textwidth}{16true cm}

\newtheorem{thm}{Theorem}[section]

\newtheorem{prop}[thm]{Proposition}
\newtheorem{lem}[thm]{Lemma}

\newtheorem*{rem}{Remark}

\DeclareMathOperator{\id}{Id}

\numberwithin{equation}{section}

\newcommand{\set}[1]{\left\{#1\right\}}

\begin{document}

\title{Global determinism of completely regular semigroups \footnote{The paper is supported by National Natural Science
Foundation of China (11971383, 11571278). }}
\author{{ \bf Baomin Yu$^{1, 2}$\footnote{The first author is supported by the Science and Technique Foundation of Weinan city (2020ZDYF-JCYJ-214) and  the Doctoral Research Foundation of Weinan Normal University (20RC16). E-mail: bmyuu@hotmail.com}}
   {\bf \quad Xianzhong Zhao$^{1}$\footnote{Corresponding author. E-mail: zhaoxz@nwu.edu.cn} }\\
    {\small $^1$ School of Mathematics and Data Science}\\
 {\small Shaanxi University of Science and Technology}\\
 {\small Xian, Shaanxi, 710021, P.R. China}\\
     {\small $^2$ School of Mathematics and Statistics},
   {\small Weinan Normal University} \\
   {\small Weinan, Shaanxi, 714099, P.R. China}}
\date{}
\maketitle
\begin{center}
\begin{minipage}{140mm}
\noindent\textbf{ABSTRACT.}
The power semigroup of a semigroup $ S $ is  the semigroup  of all nonempty subsets of $ S $ equipped with the naturally defined multiplication. A class $\mathcal{K} $ of semigroups is globally determined if any two members of $ \mathcal{K} $ with isomorphic globals are themselves isomorphic. The global determinability for various classes of semigroups has attracted some attention during the past 50 years. In this paper we prove  that the class of all completely regular semigroups is globally determined. This is an extension and generalization of a series of related results obtained by some other mathematicians.
\vskip 6pt \noindent \textbf{Keywords:} Completely regular semigroup; Power semigroup; Global determinism; Congruence.

\textbf{2010 Mathematics Subject Classifications:} 06A12; 20M17
\end{minipage}
\end{center}

\section{Introduction}
The power semigroup, or global, $P(S)$ of a semigroup $S$ is the semigroup of all nonempty subsets of $S$ equipped with the operation given by $ AB = \{ ab \mid a\in A, b\in B\}$.
A class $\mathcal{K}$ of semigroups is said to be \emph{globally determined} if any two members of $\mathcal{K}$ having isomorphic globals must themselves be isomorphic. The study of globals of semigroups was initiated by Tamura  and Shafer \cite{tamurashafer1967}. In 1967, Tamura \cite{tamura1967unsolved} asked whether the class of all semigroups is globally determined. A negative answer to Tamura's question was provided by Mogiljanskaja  \cite{mogiljanskaja1973}. Later on,
many authors devoted  to studying the structure of globals of semigroups as well as the global determinability for various classes of semigroups, see \cite{almeida,zhaogan2014,zhaogr2015,zhaogs2015,gould1984d,gould1984b,gould1984a,gould1984c,hamilton,kobayashi,mccarthyhayes,mogiljanskaja1973,
putcha1977,
putcha1979,tamura1967unsolved,tamura1984,tamura1984a,tamura1986,tamura1987,tamura1990,tamurashafer1967,yz2019,yzg,zgy}.

The following classes of  completely regular semigroups are
globally determined: semilattices \cite{gould1984a, kobayashi}, groups \cite{mccarthyhayes,tamurashafer1967}, Clifford semigroups \cite{zhaogan2014}, completely 0-simple semigroups \cite{tamura1986}, rectangular groups \cite{tamura1984a},
completely regular periodic monoid with irreducible identity \cite{gould1984c}, semigroups having regular globals \cite{zhaogr2015}, bands \cite{zhaogs2015,yzg} and normal orthogroups \cite{zgy}. As a generalization of these results,
we will prove in this paper that the class of all completely regular semigroups is globally determined.

Notice that \(S\) is naturally embedded into \(P(S)\) by the mapping $s \mapsto \{s\}$. Thus, we will identify $S$ with the singleton sets of $P(S)$ without further comments during the paper. We say that a class  $\mathcal{K}$  of semigroups satisfies the \emph{strong isomorphism property} if for any $S$, $T \in \mathcal{K}$ and each isomorphism $\psi \colon P(S) \to P(T)$, the restriction $\psi|_{S}$  of $\psi$ to $S$ is a bijection from $S$ onto $T$. Evidently, a class of semigroups is globally determined if it satisfies the strong isomorphism property.
The following classes of semigroups satisfy the strong isomorphism property: the classes of semilattices \cite{kobayashi}, completely regular periodic monoids with irreducible identity \cite{gould1984c}, Clifford semigroups \cite{zhaogan2014}, all semigroups having regular but non-idempotent globals \cite{zhaogr2015}, while the class of all bands  does not \cite{zhaogs2015}.

Recall that a semigroup $S$ is \emph{completely regular} if for every $a \in S$, there exists  $x \in S$ such that $a = axa$ and $ax=xa$.  If $S$ is a completely regular semigroup, then every $\mathscr{H}$-class of $S$ is a group, and so, for any $a\in S$, we denote by $a^{-1}$ the inverse of $a$ in $H_a$ and by $a^0$ the identity of $H_a$.  A completely regular simple semigroup  is  a \emph{completely simple semigroup}. A completely regular semigroup $S$ is completely simple if and only if $S$ satisfies the identity $a=(ax)^0a$. A semigroup $S$ satisfying the identity $a=ax$ (respectively, $a=xa$) is a \emph{left zero} (respectively,  \emph{right zero}) semigroup.

 As is well-known, if $S$ is a completely regular semigroup, then Green's relation $\mathscr{D=J}$ is a semilattice congruence on $ S $, that is, $S/\mathscr{D}$ is a semilattice, and ${D}_a $  is a completely simple semigroup for every $a\in S$. In the following, we denote  $ S $  by $ S = [Y ; S_\alpha, \alpha \in Y] $, or shortly, $ S = [Y; S_\alpha] $, where $Y\cong S/{\mathscr{D}}$ is  the \emph{structure semilattice} of $S$ and the  \emph{completely simple components} $S_\alpha$ of $S$ are the $\mathscr{D}$-classes of $S$ (see \cite{petrichreilly, howie1996book}).

For a semigroup $ S $, $ E(S)$ denotes the set of idempotents of $ S $, $ {L}_a(S) $  (respectively,  $ R_a(S) $, $ H_a(S) $, $ D_a(S) $) the $ \mathscr{L} $-class (respectively, $ \mathscr{R} $-class, $ \mathscr{H} $-class, $ \mathscr{D} $-class) containing the element $ a $ in $ S $. If $ \rho $ is a congruence on $ S $,  the $\rho$-class containing an element $ a $ in $ S $, namely the set $ \{x \in S \mid x\,\rho\,a\} $, is denoted by $ a \rho $. For any $ A\in {P}(S) $, we write $ A_\alpha = A \cap S_\alpha $ and $ \id A = \set{ \alpha \in Y \mid A_\alpha \neq \emptyset} $. Then it is easy to verify that
\begin{equation}\label{eqn:id}
\id (AB) = (\id A) (\id B) \quad (A, B \in {P}(S)).
\end{equation}

Also, we use the notations  $ \mathcal{CR} $ and $ \mathcal{CS}_0 $ for the class of all completely regular semigroups and the class of all completely simple semigroups which is neither a left nor a right zero semigroup, respectively.

By  \cite[Proposition 4.2]{zhaogs2015} and the proof of \cite[Theorem 6.8]{tamura1986},  we have
\begin{prop}\label{prop:cs0}
The class $\mathcal{CS}_0$ satisfies the strong isomorphism property.
\end{prop}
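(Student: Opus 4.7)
The proposition combines two cited results, so my plan is a case split on whether the $\mathscr{H}$-classes of $S$ are trivial. Fixing an isomorphism $\psi \colon P(S) \to P(T)$, the goal is to show that $\psi$ sends singletons to singletons and that its restriction to $S$ is onto $T$; since by Proposition~\ref{prop:cs0}'s setting $S,T \in \mathcal{CS}_0$, one knows $S$ and $T$ are completely simple but neither a left nor right zero semigroup.

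If the $\mathscr{H}$-classes of $S$ are trivial, then $S$ is a rectangular band and, since $S \in \mathcal{CS}_0$, neither factor is trivial. The property of being a band is intrinsic to $P(S)$ — for example, it corresponds to every singleton of $P(S)$ being idempotent, a condition that is transferable to $T$ via $\psi$ once singletons have been characterised algebraically. Granting this, $T$ also falls into this sub-case, and \cite[Proposition 4.2]{zhaogs2015} delivers the strong isomorphism property immediately.

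Otherwise, some (and hence every) $\mathscr{H}$-class of $S$ is a nontrivial group $H$. Here I would adapt the analysis in the proof of \cite[Theorem 6.8]{tamura1986}. The clean observation for this case is that the singletons of $H$ are exactly the units of the monoid $P(H)$: indeed, if $AB = \{e\}$ inside a group then for any $a_1,a_2 \in A$ and $b \in B$ one has $a_1 b = a_2 b$, forcing $|A|=|B|=1$. Once the group $\mathscr{H}$-classes of $P(S)$ are extracted algebraically (e.g.\ via group-like idempotents of $P(S)$ of the form $\{e\}$ with $e \in E(S)$), singletons in each $\mathscr{H}$-class are identified purely from the multiplication of $P(S)$, so $\psi$ must map such singletons to singletons. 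Singletons outside $\bigcup_{e \in E(S)} H$ are then recovered by multiplying by singleton idempotents, since every $a \in S$ satisfies $\{a\} = \{a\}\{a^0\}$ with $\{a^0\}$ a singleton idempotent $\mathscr{H}$-related to $\{a\}$.

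The main obstacle is that $\psi$ only sees $P(S)$ as an abstract semigroup, so both cases must be detectable purely from $P(S)$; this forces one to express both "$S$ is a band" and "this subset of an $\mathscr{H}$-class is a singleton" in the language of $P(S)$ alone. Both such characterisations are already present in the cited references, so the present proposition is essentially a packaging of their results for the class $\mathcal{CS}_0$, with the case split on the triviality of $\mathscr{H}$-classes being the only piece of genuine new bookkeeping.
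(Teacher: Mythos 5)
The paper gives no proof of this proposition beyond citing \cite[Proposition 4.2]{zhaogs2015} for the rectangular-band case and the proof of \cite[Theorem 6.8]{tamura1986} for the remaining completely simple semigroups, which is precisely the case split on triviality of the $\mathscr{H}$-classes that you propose, so your route is essentially the paper's. The one soft spot is your circular-sounding claim that the case is transferable across $\psi$ ``once singletons have been characterised''; this is repairable intrinsically, since a completely simple semigroup $S$ is a rectangular band if and only if $P(S)$ satisfies the identity $X^{3}=X^{2}$, a condition visible in the abstract semigroup $P(S)$.
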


A {\emph {breakable semigroup}} is a semigroup $S$ for which  each nonempty subset of $S$ is a subsemigroup of it \cite{redei}, which plays a key role in the study of global determinability of classes of  completely regular semigroups.
A description of the structure of breakable semigroups is given by

\begin{lem}[{\cite[Theorem 50]{redei}}]\label{lem:a2}
A semigroup $S$ is breakable if and only if $S = \bigcup_{\alpha \in \Gamma} S_\alpha$, where $ \Gamma $ is a chain, the semigroups $ S_\alpha (\alpha \in \Gamma)$ are pairwise disjoint left or right zero semigroups, and if $ a \in S_\alpha $ and $ b \in S_\beta $ with $\alpha < \beta $ in $ \Gamma $ then $ ab = ba = a $.
\end{lem}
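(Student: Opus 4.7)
\textbf{Proof plan for Lemma \ref{lem:a2}.}

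The plan is to handle the two directions separately, with the forward (``only if'') direction being the substantive one. For the easy direction, I would assume $S$ has the described structure and take an arbitrary nonempty $A \subseteq S$ with $a \in A \cap S_\alpha$, $b \in A \cap S_\beta$. If $\alpha \neq \beta$, then WLOG $\alpha < \beta$ and $ab = a \in A$; if $\alpha = \beta$, then $ab \in S_\alpha$ equals $a$ or $b$ according to whether $S_\alpha$ is left zero or right zero, so again $ab \in A$. Hence $A$ is a subsemigroup.

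For the hard direction, assume $S$ is breakable. First, applying the condition to singletons forces $a^2 = a$, so $S$ is a band. Applying it to doubletons $\{a,b\}$ forces $ab, ba \in \{a,b\}$ for all $a,b \in S$. I would then define $a \leq b$ iff $ab = ba = a$, and check that this is a partial order on $S$ (reflexivity is idempotence; antisymmetry is immediate; transitivity uses associativity in the manipulation $a(bc) = a b = a$ and $(cb)a = ba = a$). Call $a, b$ parallel, written $a \parallel b$, if $\{ab, ba\} = \{a,b\}$ with $ab \neq ba$; equivalently, $a$ and $b$ are $\leq$-incomparable.

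Next I would prove that $\parallel$ (extended by equality on the diagonal) is an equivalence relation, and that each equivalence class $C$ is either a left zero or a right zero semigroup. The key step is the following ``coherence'' fact: if $a \parallel b$ with $(ab, ba) = (a, b)$ (``left pattern'') and $a \parallel c$, then the pattern on $\{a, c\}$ must also be left, i.e., $(ac, ca) = (a,c)$. This is forced by associativity: the contrary assignment $(ac, ca) = (c, a)$ leads, via considering $(ac)b$ vs.\ $a(cb)$, to a contradiction of the form $c = (ac)b \neq a(cb) = a$ as I tested on the three elements $a,b,c$. A symmetric statement holds for the ``right pattern.'' Combining these shows transitivity of $\parallel$ and that the whole class $C$ containing $a$ carries the same pattern, making $C$ a left zero or right zero semigroup.

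Finally, I would pass to the quotient $\Gamma := S/{\parallel}$ and show that the induced order is total and governs the cross-class multiplication as claimed. Given two distinct classes $C, C'$ represented by $a, a'$ with $a \not\parallel a'$, we have $a \leq a'$ or $a' \leq a$, so WLOG $aa' = a'a = a$. To propagate this to all of $C \times C'$: for any $b \in C$ and $b' \in C'$, the product $bb'$ lies in $\{b,b'\}$; using associativity and the known products inside $C$ and inside $C'$ (which just permute letters within a single class), together with the known cross product $aa' = a$, one shows $bb' = b$ and $b'b = b$. This also shows the whole class $C$ sits below the whole class $C'$ in $\leq$, giving a well-defined total order on $\Gamma$ with the desired absorption rule.

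The main obstacle is the coherence step for the parallel relation: showing that a single element's ``left-vs-right'' behavior is uniform across all elements parallel to it, and that this uniformity transfers correctly to cross-class products. This is where associativity does the real work, and where the naive case analysis proliferates most. Once it is established, the remaining steps (total ordering of $\Gamma$, cross-class absorption) are straightforward diagram chases using the triple product $aba'$ or $a b' b$ for elements drawn from two classes.
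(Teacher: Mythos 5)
The paper offers no proof of this lemma at all: it is quoted verbatim from R\'edei \cite{redei} (Theorem 50), so any comparison is with the classical argument rather than with something in this paper. Your self-contained proof is essentially correct and follows the standard route: breakability applied to singletons and doubletons gives the band identity $a^2=a$ and the condition $ab\in\{a,b\}$ (which the paper itself records as $(A_2)$); the order $a\le b \Leftrightarrow ab=ba=a$ together with the incomparability relation then yields the chain decomposition into left/right zero classes. One computational slip worth fixing: in the coherence step, the single associativity test $(ac)b$ versus $a(cb)$ does not by itself refute the mixed assignment $ab=a,\ ba=b,\ ac=c,\ ca=a$. Indeed $(ac)b=cb$ and $a(cb)$ both equal $c$ when $cb=c$, so no contradiction arises from that identity alone; the contradiction you display, $c=(ac)b\ne a(cb)=a$, cannot occur. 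You must split on the four possibilities for $(bc,cb)\in\{b,c\}^2$ and use all three identities $(ab)c=a(bc)$, $(ac)b=a(cb)$ and $(bc)a=b(ca)$: the first kills $bc=b$, the second kills $cb=b$, and the third kills the remaining case $bc=cb=c$. With that repair, and the analogous (easy) verification that incomparability is transitive and that cross-class products propagate from one comparable pair of representatives to the whole pair of classes, your plan goes through and gives a complete elementary proof of the cited result.
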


The above lemma tells us that $S$ is breakable if and only if $S$ satisfies
\[
(\forall a_1, a_2 \in S) \quad a_1 a_2  \in \{ a_1, a_2\}. \leqno{(A_2):}
\]
Pelik\'an  \cite{pelikan} introduced and studied semigroups satisfying
\[
(\forall a_1, a_2, \dots, a_n \in S) \quad a_1 a_2 \cdots a_n \in \{ a_1, a_2, \dots, a_n\}, \leqno{(A_n):}
\]
and showed that a semigroup satisfying condition $(A_n )$ for some even number $n$  is breakable  and that a semigroup satisfying condition $(A_n )$ for some odd number $ n $ greater than $1$ satisfies $( A_3 )$.
\begin{lem}[{\cite[Theorem 2 and Remark]{pelikan}}]\label{lem:a3}
A semigroup $S$ satisfies $(A_3)$ if and only if either $S$ is breakable or $S$ has the following form: $S = \bigcup_{\alpha \in \Gamma} S_\alpha$, where $ \Gamma $ is a chain with a maximal element $ \omega \in \Gamma $ such that the $ S_\alpha $ are pairwise disjoint subsemigroups of $S$, $S_\alpha$ is either a left or a right zero semigroup for $\alpha \in \Gamma$, $\alpha \neq \omega$ and $ S_\omega $ is a cyclic group of order two, moreover for any $ a \in S_\alpha, b \in S_\beta$ with $\alpha < \beta$, $ab = ba =a$.
\end{lem}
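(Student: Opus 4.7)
The plan is to address the two implications separately. For the sufficiency (``if'') direction, the verification is routine: when $S$ is breakable, Lemma \ref{lem:a2} already gives $a_1 a_2 \in \{a_1, a_2\}$ for any pair, so $(a_1 a_2) a_3 \in \{a_1, a_2, a_3\}$; for the second structural form, the union $\bigcup_{\alpha \neq \omega} S_\alpha$ is itself breakable, handling triples wholly outside $S_\omega$, and when at least one factor lies in $S_\omega = \{e, g\}$ with $g^2 = e$, the absorption rule $xy = yx = x$ for $x \in S_\alpha$, $y \in S_\beta$ with $\alpha < \beta$, together with $g^2 = e$, reduces each remaining configuration to a brief finite case check in which the product always coincides with one of the factors.

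For the converse, I would first extract basic arithmetic from $(A_3)$. Applied to $a \cdot a \cdot a$ it gives $a^3 = a$, so $S$ is completely regular and every $\mathscr{H}$-class is a group of exponent dividing $2$, hence abelian. For idempotents $e, f \in E(S)$, the rewriting $e \cdot e \cdot f = ef$ and $(A_3)$ force $ef \in \{e, f\}$; symmetrically $fe \in \{e, f\}$. Hence if $e \in S_\alpha$, $f \in S_\beta$ then $ef \in S_{\alpha \beta} \cap \{e, f\}$ forces $\alpha \leq \beta$ or $\beta \leq \alpha$, so $Y = S/\mathscr{D}$ is a chain. Next, for $a \in S_\alpha$, $b \in S_\beta$ with $\alpha < \beta$, the product $bab$ lies in $S_\alpha \cap \{a, b\} = \{a\}$, so $bab = a$; symmetrically $aba = a$, and expanding $abab$ as both $(aba)b$ and $a(bab)$ yields $ab = a^2$, with $ba = a^2$ similarly.

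It remains to analyse each $\mathscr{D}$-class $S_\alpha$, which is a completely simple semigroup satisfying $(A_3)$. Representing $S_\alpha$ as a Rees matrix semigroup over a group $G$ of exponent dividing $2$ with index sets $I$, $\Lambda$, the plan is to show by carefully chosen test triples that the only possibilities are (i) $G$ trivial with $|\Lambda| = 1$ (a left zero semigroup), (ii) $G$ trivial with $|I| = 1$ (a right zero semigroup), or (iii) $|I| = |\Lambda| = 1$ and $G = \mathbb{Z}/2\mathbb{Z}$. With this classification the elements of each non-maximal component are idempotent, so the earlier identity $ab = a^2 = a$ matches the absorption rule exactly. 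Finally, any group component must be maximal: if $S_\omega = \{e, g\}$ with $\omega < \beta$ and $h \in S_\beta$, then $ehe \in S_\omega \cap \{e, h\} = \{e\}$ forces $eh = e$; $ghg \in S_\omega \cap \{g, h\} = \{g\}$ forces $gh = e$ (else $g^2 = e \neq g$); but then $hgh$ must equal $g$, while $(hg)h$ is $eh$ or $gh$, hence $e$, a contradiction.

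I expect the main obstacle to be the Rees-matrix classification: systematically ruling out proper rectangular bands with $|I|, |\Lambda| \geq 2$, nontrivial left and right groups, and groups of exponent $2$ of order $\geq 4$, each by exhibiting a specific triple violating $(A_3)$, is the central piece of bookkeeping. Once this is done, together with the short maximality argument for the group component described above, the two structural alternatives of the lemma follow at once.
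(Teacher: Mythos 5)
The paper gives no proof of this lemma at all---it is imported verbatim from Pelik\'an's paper as a cited result---so there is no internal argument to compare yours against; what follows is an assessment of your proposal on its own terms. Your strategy is sound and every identity you actually derive checks out: $a^3=a$ gives complete regularity with $\mathscr{H}$-classes of exponent two; the triple $e\cdot e\cdot f$ shows idempotents multiply breakably, whence $Y=S/\mathscr{D}$ is a chain; $aba=bab=a$ and the two bracketings of $abab$ give $ab=ba=a^2$ for $\alpha<\beta$; and your argument that a two-element group component cannot lie below another component is correct, since $hg=g^2=e$ forces $hgh=e$ while $(A_3)$ forces $hgh\in\{g,h\}\cap S_\omega=\{g\}$. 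The one place the write-up stops short of a proof is the classification of the completely simple components, which you announce as a plan rather than carry out. It does go through with the kind of test triples you anticipate: a rectangular band with $|I|,|\Lambda|\ge 2$ already violates $ef\in\{e,f\}$; if $a\in H_e$ with $a^2=e\ne a$, then an idempotent $f\ne e$ with $f\,\mathscr{R}\,e$ is killed by $f\cdot a\cdot a=fe=e\notin\{f,a\}$ (dually for $\mathscr{L}$, and an idempotent in general position forces an $\mathscr{R}$-related one since every $\mathscr{H}$-class of a completely simple semigroup contains an idempotent), so such a component is a group; and a third group element $b$ is killed by $a\cdot b\cdot e=ab\notin\{a,b,e\}$.

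You should also note that the paper's own Lemma \ref{lem:a32}, parts (iv)--(vii), proves exactly these component-level facts (single $\mathscr{L}$- or $\mathscr{R}$-class per component, non-maximal components are left or right zero with the absorption rule, a maximal component containing a non-idempotent is $\{a,a^0\}\cong\mathbb{Z}/2\mathbb{Z}$) from a hypothesis that is weaker than $(A_3)$ by Lemma \ref{lem:a31}(iii). So the ``central piece of bookkeeping'' you flag as the main obstacle is in effect already carried out in the paper and could be invoked in place of a Rees-coordinate case analysis.
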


In the rest of the paper, we denote by $\mathcal{A}_2(S)$ and $\mathcal{A}_3(S)$ the sets of subsemigroups of $S$ satisfying $(A_2)$ and $(A_3)$, respectively. Also, we write
$  \overline{\mathcal{A}_2}(S)  = \{ A \in \mathcal{A}_2(S) \mid | \id A  | = 1 \}$.
Clearly, $\mathcal{A}_2(S) \subseteq \mathcal{A}_3(S) \subseteq EP(S) $, where $ EP(S) $ is the set of idempotents of $ P(S) $.

Throughout this paper, unless specified otherwise, we always assume that $S=[Y; S_\alpha]$ and $S'=[Y'; S'_{\alpha'}]$ are completely regular semigroups,  $ \psi $ is an isomorphism from $ {P}(S) $ onto $ {P}(S')$. We will show that $\psi(a) \in S'$ if $D_a$ is not a left or right zero semigroup. However, we can no longer guarantee that $\psi(a) \in S'$ in the other cases. Instead, we will construct a new mapping $\eta \colon S \to S'$ that coincides with $\psi$ for the $\mathscr{D}$-classes which are not left or right zero semigroups and finally, prove that $\eta$ is an isomorphism.

The rest of this paper is organized as follows. We start out with some characterizations of subsets $\mathcal{A}_2(S)$ and $\mathcal{A}_3(S)$ of $P(S)$ in Section 2, and  prove that the restrictions $\psi |_{\mathcal{A}_3(S)}$ and $ \psi |_{\mathcal{A}_2(S)}$ of $\psi$ to $\mathcal{A}_3(S)$ and $\mathcal{A}_2(S)$ respectively are bijections from $\mathcal{A}_3(S)$  onto $\mathcal{A}_3(S')$ and $\mathcal{A}_2(S)$ onto $\mathcal{A}_2(S')$,  respectively.
Then we show in Section 3 that there exists a semilattice isomorphism $\theta \colon Y \to Y'$ such that the restriction $\psi|_{P(S_\alpha)}$ of $\psi$ to $P(S_\alpha)$ is an isomorphism from $P(S_\alpha)$ onto $P(S'_{\theta(\alpha)})$ for all $\alpha \in Y$. In Section 4, we show that $\psi(a) \in S'$ if $D_a \in \mathcal{CS}_0$ or $a$ is not maximal for the natural partial order in $S$. Finally,  in Section 5 we construct a new mapping $\eta \colon S \to S'$ that close related to $ \psi $ and prove that it is an isomorphism. This shows that the class of all completely regular semigroups is globally determined.

For other notations and terminology not given in this article, the reader is referred to the books \cite{howie1996book, petrichreilly}. Also, the generalized continuum hypothesis is assumed in this paper.

\section{On the subsets $\mathcal{A}_2(S)$ and $\mathcal{A}_3(S)$ of $S$}

The following result is a direct consequence of Lemma \ref{lem:a3}.
\begin{lem}\label{lem:a30}
Let $ A \in \mathcal{A}_3(S) $. Then:
\begin{enumerate}
 \item $\ a^0 \in A $ for every $a \in A$;
 \item if $ B \subseteq A $ is such that $ B^2 = A $, then $ B = A $.
\end{enumerate}
\end{lem}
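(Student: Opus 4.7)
The plan is to handle the two parts separately, using the axiom $(A_3)$ directly rather than the structural classification in Lemma \ref{lem:a3}. For part (i), I will apply $(A_3)$ to the triple $(a,a,a)$: since $a\cdot a\cdot a\in\{a\}$, we obtain $a^3=a$ in $S$. Because $S$ is completely regular, $a$ lies in a maximal subgroup $H_a$ whose identity is $a^0$, and inside this group the relation $a^3=a$ forces $a^2=a^0$. Since $a\in A$ and $A$ is a subsemigroup of $S$, we have $a^2\in A$, whence $a^0=a^2\in A$ as required. If one preferred a structural approach, Lemma \ref{lem:a3} together with Lemma \ref{lem:a2} shows that every non-top component of $A$ consists of idempotents, while a top component, when present, is a cyclic group of order two that contains its identity; but the uniform argument above sidesteps the case split.

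For part (ii) the plan is to exploit $(A_3)$ by producing a three-fold factorization. Given $a\in A=B^2$, first write $a=b_1b_2$ with $b_1,b_2\in B$. Then, since $B\subseteq A=B^2$, the factor $b_1$ itself admits a factorization $b_1=c_1c_2$ with $c_1,c_2\in B$. Substituting gives $a=c_1c_2b_2$, a product of three elements of $B\subseteq A$, and $(A_3)$ now yields $a\in\{c_1,c_2,b_2\}\subseteq B$. Hence $A\subseteq B$, and combined with the hypothesis $B\subseteq A$ this gives $B=A$.

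The only subtlety, already visible in part (ii), is that an element of $\mathcal{A}_3(S)$ need not be breakable, so the two-fold decomposition $a=b_1b_2$ by itself does not force $a\in\{b_1,b_2\}$; the trick is to re-factor one of the factors so that $(A_3)$ can be applied to the resulting three-fold product. With this small observation in place, I expect no genuine obstacle, and both parts follow in a handful of lines.
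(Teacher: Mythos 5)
Your proof is correct, but it takes a different route from the paper, which offers no written argument at all and simply declares the lemma ``a direct consequence of Lemma~\ref{lem:a3}'', i.e.\ of Pelik\'an's structural classification of $(A_3)$-semigroups as chains of left/right zero semigroups possibly capped by a two-element group. You instead work purely from the identity $(A_3)$ itself: for (i), $a\cdot a\cdot a\in\{a\}$ gives $a^3=a$, which inside the group $H_a$ forces $a^0=a^2\in A$ since $A$ is a subsemigroup (note this is the one place you use complete regularity of $S$; the structural route uses it only through Lemma~\ref{lem:a3}); for (ii), the re-factorization $a=b_1b_2=(c_1c_2)b_2$ with all three factors in $B$ is exactly the right trick to convert the two-fold hypothesis $A=B^2$ into a three-fold product to which $(A_3)$ applies, yielding $A\subseteq B$ directly. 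Your version buys self-containedness and avoids the case analysis hidden in ``direct consequence'' (in particular, for (ii) the structural route still has to argue separately about the possible order-two group at the top of the chain, where products can escape the set of factors); the paper's route buys brevity by leaning on machinery it needs anyway. Both arguments are sound.
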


\begin{lem}\label{lem:a31}
Let $A \in \mathcal{A}_3(S)$ and $B \in P(S)$. Then:
\begin{enumerate}
 \item if $B^2 = A$ then $\id B = \id A$;
 \item if $\id B \subseteq \id A$ and $BA=AB=A$, then $B \subseteq A$;
 \item if $BA = B^2 = A$ then $B=A$.
\end{enumerate}
\end{lem}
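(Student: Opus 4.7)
The plan is to prove (i), then (ii), and finally derive (iii) cheaply from them together with Lemma \ref{lem:a30}(ii).

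For (i), I would apply \eqref{eqn:id} to obtain $(\id B)(\id B) = \id(B^2) = \id A$ in the semilattice $Y$. Since every element of $Y$ is idempotent, the inclusion $\id B \subseteq (\id B)(\id B) = \id A$ is automatic. For the reverse inclusion, I would invoke Lemma \ref{lem:a3} applied to $A$, which implies that $\id A$ is a chain in $Y$. Consequently $\id B \subseteq \id A$ is also a chain, so any product $\alpha \beta$ of two elements of $\id B$ is $\min(\alpha,\beta) \in \id B$; hence $(\id B)(\id B) = \id B$ and $\id A \subseteq \id B$.

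For (ii), fix $b \in B$ and let $\beta \in \id B \subseteq \id A$ be the unique index with $b \in S_\beta$; the goal is to prove $b \in A_\beta$. By Lemma \ref{lem:a3} applied to $A$, the component $A_\beta$ is either a left zero semigroup, a right zero semigroup, or (only if $\beta$ is the top of the chain $\id A$) a cyclic group of order two. Choose any $e \in A_\beta$; since $AB = BA = A$ and $be, eb \in S_\beta$, we get $be, eb \in A_\beta$. I would then handle each case in the Rees matrix representation of the completely simple semigroup $S_\beta$ and show that these constraints force $be = b$, whence $b = be \in A_\beta$. In the left-zero case, $A_\beta$ consists of idempotents in a single $\mathscr{L}$-class $L_\mu$; the condition $eb \in A_\beta$ pins the $\mathscr{L}$-coordinate of $b$ to $\mu$ and the group coordinate to $p_{\mu i_b}^{-1}$, so $b$ itself is idempotent in $L_\mu$, and then $be = b$ follows from the fact that idempotents in a common $\mathscr{L}$-class multiply as a left zero semigroup. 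The right-zero case is symmetric, and the group case uses $be \mathscr{R} b$ and $eb \mathscr{L} b$ to conclude $b \mathscr{H} e$, so $be = b$ because $e$ is the identity of $H_e$. The main obstacle is precisely this case analysis: one must verify that $be, eb \in A_\beta$ force $b$ to be an idempotent of $A_\beta$, not merely $\mathscr{L}$-, $\mathscr{R}$-, or $\mathscr{H}$-related to $A_\beta$.

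For (iii), (i) gives $\id B = \id A$, and from $B^2 = A$ we have $BA = B \cdot B^2 = B^3 = B^2 \cdot B = AB$, both equal to $A$, so $BA = AB = A$. Thus (ii) applies and gives $B \subseteq A$, and Lemma \ref{lem:a30}(ii) upgrades this to $B = A$.
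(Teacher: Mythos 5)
Your proposal is correct, and parts (i) and (iii) coincide with the paper's proof step for step: the same use of \eqref{eqn:id} plus idempotency of $Y$ and the chain property of $\id A$ for (i), and the same reduction $AB=B^3=BA=A$ followed by (i), (ii) and Lemma \ref{lem:a30}(ii) for (iii). The genuine difference is in (ii). You invoke Lemma \ref{lem:a3} to classify $A_\beta$ as a left zero semigroup, a right zero semigroup, or a two-element group, and then chase Rees matrix coordinates in each case to force $be=b$. That works (with the small caveat that in the group case you should take $e$ to be the identity of $A_\beta$, which is available since $A_\beta$ is a group, so that ``$e$ is the identity of $H_e$'' makes sense), but the paper bypasses the entire case analysis with a two-line trick: for $a\in A_\alpha$ and $b\in B_\alpha$ one has $ba\in B_\alpha A_\alpha\subseteq A_\alpha$, hence $(ba)^0\in A_\alpha$ by Lemma \ref{lem:a30}(i), and since $(ba)^0$ is an idempotent $\mathscr{R}$-related to $b$ in the completely simple semigroup $S_\alpha$, it acts as a left identity on $b$, giving $b=(ba)^0b\in A_\alpha B_\alpha\subseteq A_\alpha$. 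The paper's route is shorter and uses only the hypothesis $A\in\mathcal{A}_3(S)$ through Lemma \ref{lem:a30}(i), never needing the full structure theorem of Lemma \ref{lem:a3}; your route makes the mechanism completely explicit at the cost of three coordinate computations and of implicitly identifying the chain decomposition of Lemma \ref{lem:a3} with the decomposition $A=\bigcup_{\gamma\in\id A}A_\gamma$ (true, but worth a sentence). Both are valid proofs of the lemma.
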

\begin{proof}
(i) Suppose $B^2 = A$ and let $\alpha \in \id B$. By \eqref{eqn:id}, we have
\[
  \alpha = \alpha^2 \in (\id B)^2 = \id B^2 = \id A.
\]
Thus $  \id B \subseteq \id A $. Since  $\id A$ is a chain, and so does $\id B$. Therefore,
\[
	\id B = (\id B)^2 = \id B^2 = \id A.
\]

(ii) Suppose that $\id B \subseteq \id A$ and $BA= AB = A$. Let $a \in A_\alpha$ and $b \in B_\alpha$ where $\alpha \in \id B$. Then $ba \in B_\alpha A_\alpha \subseteq A_\alpha$. Since $A \in \mathcal{A}_3(S)$, by Lemma \ref{lem:a30}(i) we have  $(ba)^0 \in A_\alpha$. Since $A_{\alpha}$ is completely simple, $b = (ba)^0b \in A_\alpha B_\alpha \subseteq A_\alpha$.  Thus $B \subseteq A$.

(iii) If $BA = B^2 = A$, then $AB = B^3= BA = A$ and by part (i) we have $\id B = \id A$. It follows by part (ii) that  $B \subseteq A$. Then, by Lemma \ref{lem:a30}(ii) we have $B = A$.
\end{proof}

As a generalization of \cite[Lemma 2.5]{zgy}, we have
\begin{lem}\label{lem:a32}
Let $ A \in EP(S) $ be such that
\[
	(\forall B \in P(S)) \quad BA = B^2 = A  \Longrightarrow  B = A .
\]
Then the following statements are true:
  \begin{enumerate}
    \item $a^3 = a $ and $ a^0 \in A $ for all $a \in A$;
    \item $ ab \in \{ a, b, a^0, b^0 \} $ for all $a, b \in A$;
    \item $ \id A$ is a subchain of $Y$;
    \item if $ a^2 \neq a $ for some $ a \in A $, then $ H_a(S) \cap A = \{ a, a^0 \}$;
    \item $ A_\alpha $ contained either in a single $ \mathscr{L} $-class  or in a single $ \mathscr{R} $-class in $ S $ for any $ \alpha \in \id A$;
    \item if $ \alpha $ is not a maximal element in $ \id A $, then $ A_\alpha $ is either a left or a right zero semigroup. Moreover, if $ a \in A_\alpha$ and $ b \in A_\beta $ with $ \alpha < \beta $, then $ ab = ba =a $;
    \item if $\id A$ has a maximal element $ \omega $ and there is an element $ a \in A_\omega $ such that $ a^2 \neq a $, then $ A_\omega = \{ a, a^0 \} $ is a cyclic group of order two.
  \end{enumerate}
\end{lem}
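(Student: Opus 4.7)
The approach is to exploit the hypothesis on $A$ as a \emph{minimality} principle: whenever one can exhibit a set $B\neq A$ with $BA=B^2=A$, the hypothesis is violated, so any configuration that would permit such a $B$ must be excluded. The basic observation used throughout is that $A\in EP(S)$ means $A^2=A$, whence $A$ is a subsemigroup of $S$ and $a^n\in A$ for every $a\in A$ and $n\ge 1$. The plan is to prove (i) and (ii) as the technical heart of the argument, and then deduce (iii)--(vii) as structural consequences using the Rees description of the completely simple components $S_\alpha$.

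For (i), it suffices to show $a^2=a^0$ (which yields both $a^3=a$ and $a^0=a^2\in A$). Suppose for contradiction that $a$ has order $\ge 3$ in $H_a$. In the finite-order case $k\ge 3$, the factorisation $a=a^2\cdot a^{k-1}$ with $a^2,a^{k-1}\in A\setminus\{a\}$ suggests the witness $B=A\setminus\{a\}$: any factorisation $xy=z$ in $A$ that originally used $a$ as a factor is rerouted via $a=a^2\cdot a^{k-1}$, whence $B^2=BA=A$. The infinite-order case requires an analogous but separate construction using distinct powers $a^n$. Either way one arrives at $B\neq A$ with $B^2=BA=A$, the desired contradiction. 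For (ii), (i) supplies $a^0,b^0\in A$ and $ab\in A^2=A$; assuming $ab\notin\{a,b,a^0,b^0\}$ and setting $B=A\setminus\{ab\}$, the alternative factorisations $ab=a\cdot(b^0b)=(aa^0)\cdot b$, together with the abundant idempotents supplied by (i), let one reconstruct every product in $A$ without ever using $ab$ as a factor, yielding $B^2=BA=A$ but $B\neq A$, again contradicting the hypothesis.

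Parts (iii)--(vii) are then structural deductions from (i), (ii) and the Rees description of the $S_\alpha$. For (iii), applying (ii) to $a\in A_\alpha$, $b\in A_\beta$ and combining with \eqref{eqn:id} gives $\alpha\beta\in\{\alpha,\beta\}$, so $\alpha$ and $\beta$ are $Y$-comparable. For (iv), any $h\in H_a\cap A$ satisfies $h^0=a^0$, and (ii) applied to $(a,h)$ together with a short group computation in $H_a$ (using $a^{-1}=a$, which follows from $a^3=a$) forces $h\in\{a,a^0\}$. For (v), if $A_\alpha$ contained elements from two distinct $\mathscr{L}$-classes and from two distinct $\mathscr{R}$-classes, then a small bookkeeping with the Rees coordinates of $S_\alpha$ produces a pair whose product violates (ii). For (vi), at non-maximal $\alpha$ pick $\beta>\alpha$ and $b\in A_\beta$; (ii) and \eqref{eqn:id} force $ab,ba\in\{a,a^0\}$, and a short further argument (ruling out $ab=a^0$ via (i)) pins them to $a$; the left-/right-zero property of $A_\alpha$ then follows from (ii), (v) and a final case analysis inside the Rees form. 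Finally (vii) combines (i), (iv) and (v): a non-idempotent $a\in A_\omega$ has $a^2=a^0\in A_\omega$; (v) confines $A_\omega$ to a single $\mathscr{L}$- or $\mathscr{R}$-class; and (iv) collapses it to $\{a,a^0\}$, a cyclic group of order~$2$.

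The main obstacle is the explicit construction of $B$ in part~(i), especially in the infinite-order case: $B$ must simultaneously satisfy $B^2=A$ and $BA=A$, so every factorisation in $A$ that originally went through $a$ has to be rewritten without using $a$, and without inadvertently adding elements of $P(S)\setminus A$ to $B^2$. Once (i) and (ii) are in hand, the remaining parts are essentially bookkeeping against the Rees-matrix structure of each $A_\alpha$.
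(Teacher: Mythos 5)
Your overall strategy is the right one and matches the paper's: treat the hypothesis as a minimality principle and refute bad configurations by exhibiting a witness $B=A\setminus\{x\}$ with $B^2=BA=A$. For parts (i)--(iii) the paper simply cites \cite[Lemma 2.5]{zgy} and its proof, and your sketch points in the same direction; but note that you have left the hard content undone there --- the infinite-order case of (i) is only announced, and in (ii) you must check that \emph{every} $z\in A=A^2$ (not just $ab$ itself) factors through $A\setminus\{ab\}$, which requires rewriting $z=(ab)y$ as $a(by)$ and then handling the sub-case $by=ab$, and so on. These can be completed, but they are the substance of the cited lemma rather than routine bookkeeping.

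The genuine gap is part (vi). You present (iii)--(vii) as ``structural deductions from (i), (ii) and the Rees description,'' but the assertion that $A_\alpha$ is a left or right zero semigroup for non-maximal $\alpha$ does not follow from (i), (ii) and (v). Counterexample: let $Y=\{\alpha,\beta\}$ with $\alpha<\beta$, let $S_\beta=\{e\}$ be trivial, let $S_\alpha=\{g,g^0\}$ be a group of order two with $e$ acting as an identity, and take $A=\{g,g^0,e\}$. Then $A\in EP(S)$ and $A$ satisfies (i)--(v) as well as $ab=ba=a$ for all $a\in A_\alpha$, $b\in A_\beta$, yet $A_\alpha$ is a cyclic group of order two. (Of course this $A$ violates the lemma's hypothesis, since $B=\{g,e\}$ gives $B^2=BA=A$ --- and that is exactly the point: proving that every element of a non-maximal $A_\alpha$ is idempotent requires a \emph{fresh} application of the minimality hypothesis, not a case analysis in the Rees form.) The paper supplies this by taking a putative $a\in A_\alpha$ with $a\neq a^2$, setting $B=A\setminus\{a^2\}$, and showing $B^2=BA=A$ by using an element $c\in A_\beta$ with $\beta>\alpha$ to regenerate each $b\in B_\gamma$, $\gamma\le\alpha$, via $b=bb^0c=bc\in B^2$. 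Your proposal omits this construction entirely. (A smaller shortfall: in (vii), statement (iv) only controls $H_a(S)\cap A$, so you still need the extra step showing that $A_\omega$ meets no other $\mathscr{H}$-class --- the paper's computation $ab^0=a^0b^0=b^0$ combined with Green's lemma --- but that one is recoverable along your route.)
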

\begin{proof}
Parts (i), (ii) and (iii) are direct consequences of  \cite[Lemma 2.5]{zgy} and its proof.

 (iv) Suppose that there exists $ a \in A $ such that $ a^2 \neq a $ and let $ b \in H_a(S) \cap A $. Then $ b^0 = a^0 $. According to part (i), $ b^2 = a^0 = a^2 \in A $. Then by part (ii), $ ab \in \{a, b, a^0 \} $.
 If $ ab = a $ then
 \[
 	b =a^0 b = a^2b = a(ab) = a^2 = a^0 .
\]
 If $ ab = b $ then
 \[
 	a = aa^0 =ab^2 =(ab)b = b^2  = a^2 ,
\]
and this contradicts the assumption that $ a^2 \neq a $. Thus $ ab \neq b $.
If $ ab = a^0 $ then
\[
	b = a^0b  = (ab)b = ab^2 = aa^0 =a.
\]
Hence, $ H_a(S) \cap A = \{ a, a^0 \} $.

 (v)  If there is  $ \alpha \in \id A$ such that $ A_\alpha $ contained neither in a single $ \mathscr{L} $-class nor a single $ \mathscr{R} $-class in $ S $, then there exist $a, b \in A_\alpha $ such that $ H_a(S), H_b(S)$ and $ R_a(S) \cap L_b(S) $ are pairwise disjoint. Since $ab \in R_a(S) \cap L_b(S)$, $ab \not\in H_a(S) \cup H_b(S)$. However, by part (ii) we have $ ab \in \{a, a^0, b, b^0\} \subseteq H_a(S) \cup H_b(S) $, a contradiction. Therefore, $ A_\alpha $ contained either in a single $ \mathscr{L} $-class  or a single $ \mathscr{R} $-class in $ S $ for all $\alpha \in \id A$.

 (vi) Suppose that $ \alpha, \beta \in \id A $ with $ \alpha<\beta$. To show that $ A_\alpha $ is either a left or a right zero semigroup, by part (v) we need only prove that $ a^2 = a $ for all $ a \in A_\alpha $. In fact, if there exists $ a \in A_\alpha $ such that $ a \neq a^2=a^0 $, let us consider the subset $ B = A \setminus \{ a^2\} $ of $ A $. Clearly $B^2 \subseteq BA\subseteq A$. Now let $b \in A$. If $b\not\in B$, then $b=a^2\in B^2$. If $b\in B$, we may assume $b \in B_\gamma$ with $\gamma\leq \alpha$, since $B_\gamma=A_\gamma$ for all $\gamma > \alpha$.  Take $c \in B_\beta$. Then by part (ii) $b^0=b^0c$ and so $b=bb^0=bb^0c=bc \in B^2$. It follows that $A\subseteq B^2$. Thus $B^2=BA=A$. By hypothesis we have $ B = A $, a contradiction.  Therefore $ a^2 = a $ for all $ a \in A_\alpha $ and so, $ A_ \alpha $ is either a left or a right zero semigroup.

 If $a \in A_\alpha$ and $ b \in A_\beta $, then by part (ii), both $ ab $ and $ ba $ are in $ \{ a, b, b^0 \} \cap A_\alpha = \{ a \} $. Consequently, $ ab = ba = a $.

 (vii) Assume that $ \omega $ is a maximal element of $ \id A $ and that $ a \in A_\omega $ is such that $ a^2 \neq a $. By part (v),  $ A_\omega \subseteq L_a(S) $ or $ A_\omega \subseteq R_a(S) $. Without loss of generality, suppose $ A_\omega \subseteq R_a(S) $. If there exists $ b \in A_\omega \setminus \{ a, a^0 \}$, then by part (iv) $ b \not \in H_a(S) $. Thus $ a^0 \neq b^0 $. Since $ a \, \mathscr{R} \, b $, $ a^0b^0 = b^0 $. Also, by part (ii), we have $ ab^0 \in \{ a, a^0, b^0 \} \cap H_b=\{b^0\}$. Hence $ab^0=b^0=a^0b^0$. Since the mapping $ x \mapsto xb^0 $ from $ H_a(S) $ onto $ H_b(S) $ is a bijection,  $ a=a^0=a^2 $, a contradiction. Therefore, $ A_\omega = \{ a, a^0 \}$ is a group of order two.
\end{proof}

By Lemmas \ref{lem:a31} and \ref{lem:a32}, we obtain the following characterization of $\mathcal{A}_3(S)$.
\begin{prop}\label{prop:a34}
Let $A \in EP(S)$. Then $A \in \mathcal{A}_3(S)$ if and only if
\[
	(\forall B \in P(S)) \quad B^2 = BA = A  \Longrightarrow B = A.
\]
\end{prop}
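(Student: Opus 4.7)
The forward direction is immediate: it is a restatement of Lemma \ref{lem:a31}(iii).

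For the converse, my plan is to exploit the observation that the stated cancellation-type property is precisely the hypothesis of Lemma \ref{lem:a32}, which gives a detailed structural description of $A$. I would then match this description against Lemma \ref{lem:a3} (Pelik\'an's classification of semigroups satisfying $(A_3)$) to conclude $A \in \mathcal{A}_3(S)$. Concretely, parts (iii) and (vi) of Lemma \ref{lem:a32} guarantee that $\id A$ is a chain in $Y$, that $A_\alpha$ is a left or right zero semigroup for every non-maximal $\alpha \in \id A$, and that the ``multiplication between slices'' rule $ab = ba = a$ holds for $a \in A_\alpha$ and $b \in A_\beta$ whenever $\alpha < \beta$. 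Thus, modulo understanding the top slice $A_\omega$ when a maximum $\omega$ exists, $A$ already has exactly the shape described in Lemma \ref{lem:a3}.

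The case that needs a little care is the top slice $A_\omega$ when $\id A$ possesses a maximum $\omega$. I would split into two subcases. If $a^2 = a$ for every $a \in A_\omega$, then combining Lemma \ref{lem:a32}(v) (which confines $A_\omega$ to a single $\mathscr{L}$-class or a single $\mathscr{R}$-class) with the fact that $\mathscr{L}$- or $\mathscr{R}$-equivalent idempotents in any semigroup multiply to give a left or right zero band shows that $A_\omega$ is itself a left or right zero semigroup; together with the behaviour of the lower slices, the whole of $A$ is then breakable and so satisfies $(A_2)$ by Lemma \ref{lem:a2}, a fortiori $(A_3)$. Otherwise some $a \in A_\omega$ has $a^2 \ne a$, and Lemma \ref{lem:a32}(vii) gives $A_\omega = \{a, a^0\}$ as a cyclic group of order two, which is precisely the non-breakable alternative of Lemma \ref{lem:a3}, so $A$ satisfies $(A_3)$. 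When $\id A$ has no maximum, every $\alpha \in \id A$ is non-maximal, so (vi) applies uniformly and $A$ is breakable.

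The main obstacle, though mild, is that Lemma \ref{lem:a32}(vi) is silent about maximal indices, so the structural verdict for the top slice must be assembled from parts (ii), (iv), (v) and (vii) according to whether $A_\omega$ contains a non-idempotent. Once this case distinction is handled, the remainder is just a pattern-match against the two alternatives in Lemma \ref{lem:a3}.
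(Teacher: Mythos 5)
Your proposal is correct and follows essentially the same route as the paper, whose proof consists of the single line ``By Lemmas \ref{lem:a31} and \ref{lem:a32}'': the forward implication is Lemma \ref{lem:a31}(iii), and the converse is obtained by matching the structural conclusions of Lemma \ref{lem:a32} against Pelik\'an's classification in Lemma \ref{lem:a3}, exactly as you do. Your write-up simply makes explicit the case analysis for the top slice (all idempotent versus containing a non-idempotent, handled by parts (v) and (vii) of Lemma \ref{lem:a32}) that the paper leaves to the reader.
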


\begin{thm}\label{thm:a35}
  The restriction $ \psi|_{\mathcal{A}_3(S)} $ of $ \psi $ to $ \mathcal{A}_3(S) $ is a bijection from $ \mathcal{A}_3(S) $ onto $ \mathcal{A}_3(S') $.
\end{thm}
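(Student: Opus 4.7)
The plan is to read off the theorem directly from Proposition \ref{prop:a34}, which is the whole point of having developed that characterization. What Proposition \ref{prop:a34} gives us is an intrinsic description of $\mathcal{A}_3(S)$ inside the semigroup $P(S)$: an element $A \in P(S)$ belongs to $\mathcal{A}_3(S)$ iff (a) $A$ is an idempotent of $P(S)$ and (b) for every $B \in P(S)$, the implication $B^2 = BA = A \Longrightarrow B = A$ holds. Both (a) and (b) are formulated purely in terms of the semigroup operation and equality in $P(S)$, so any semigroup isomorphism must preserve them.

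Concretely, first I would note that $\psi$ sends idempotents to idempotents, hence restricts to a bijection $EP(S) \to EP(S')$. Then I would take $A \in \mathcal{A}_3(S)$, set $A' = \psi(A) \in EP(S')$, and verify condition (b) in $S'$: if $B' \in P(S')$ satisfies $(B')^2 = B'A' = A'$, let $B = \psi^{-1}(B')$; applying $\psi^{-1}$ we get $B^2 = BA = A$, and the hypothesis on $A$ forces $B = A$, hence $B' = A'$. By Proposition \ref{prop:a34} applied in $S'$, we conclude $A' \in \mathcal{A}_3(S')$, so $\psi(\mathcal{A}_3(S)) \subseteq \mathcal{A}_3(S')$.

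The reverse inclusion is obtained by running exactly the same argument with the isomorphism $\psi^{-1} \colon P(S') \to P(S)$ in place of $\psi$, which gives $\psi^{-1}(\mathcal{A}_3(S')) \subseteq \mathcal{A}_3(S)$. Combining the two inclusions yields $\psi(\mathcal{A}_3(S)) = \mathcal{A}_3(S')$, and since $\psi$ is already injective, its restriction to $\mathcal{A}_3(S)$ is a bijection onto $\mathcal{A}_3(S')$.

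There is no substantive obstacle to overcome at this step; all the real work has been absorbed into the structural lemmas (Lemmas \ref{lem:a31} and \ref{lem:a32}) and the resulting characterization in Proposition \ref{prop:a34}. The theorem is essentially a ``transport along an isomorphism'' of a first-order property of $P(S)$, and the proof is a few lines long.
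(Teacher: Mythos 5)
Your proposal is correct and matches the paper's own proof essentially verbatim: both transport the first-order characterization of $\mathcal{A}_3(S)$ from Proposition \ref{prop:a34} along $\psi$ (pulling the test subset back via $\psi^{-1}$) and then apply the dual argument with $\psi^{-1}$ for the reverse inclusion. No further comment is needed.
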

\begin{proof}
  Let $ A \in \mathcal{A}_3(S) $. Then $ A \in EP(S) $ and so $ \psi(A) \in EP(S') $. Suppose that $ X \in P(S') $ is such that $ X^2 = X \psi(A) = \psi(A) $. Then
$    (\psi^{-1}(X))^2 = \psi^{-1}(X) A = A $.
  By Proposition \ref{prop:a34}, $ \psi^{-1}(X) = A $ and so $ X = \psi(A) $. Hence, again by  Proposition \ref{prop:a34}, $ \psi(A) \in \mathcal{A}_3(S') $.


The dual argument shows that $ \psi^{-1}(X) \in \mathcal{A}_3(S) $ if  $ X \in \mathcal{A}_3(S') $.
\end{proof}

From the proof of \cite[Lemma 2.12]{zgy}, we have
\begin{lem}\label{lem:green1}
 Let $A, B \in P(S)$. Then
 \begin{enumerate}
\item if $A\, \mathscr{R}\, B$, then $AS = BS$;
\item if $A\,\mathscr{D}\, B$, then $\id A=\id B$.
\end{enumerate}
\end{lem}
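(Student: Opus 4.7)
For part (i), the plan is to unpack Green's $\mathscr{R}$-relation directly. Given $A\,\mathscr{R}\,B$ in $P(S)$, there exist $U,V \in P(S)^1$ with $A=BU$ and $B=AV$. I would then multiply on the right by $S\in P(S)$ and use the trivial inclusion $US\subseteq S$ to obtain $AS=B(US)\subseteq BS$, and symmetrically $BS\subseteq AS$, so $AS=BS$. No real difficulty here.

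For part (ii), I plan to reduce to the $\mathscr{R}$-case via the standard decomposition $\mathscr{D}=\mathscr{R}\circ\mathscr{L}$: there exists $C\in P(S)$ with $A\,\mathscr{R}\,C$ and $C\,\mathscr{L}\,B$, and it suffices to prove separately that $A\,\mathscr{R}\,C$ implies $\id A=\id C$ and $C\,\mathscr{L}\,B$ implies $\id C=\id B$ (the latter follows by the left--right dual of the former). Writing $A=CU$ and $C=AV$ with $U,V\in P(S)$, applying \eqref{eqn:id} gives the identities $\id A=(\id C)(\id U)$ and $\id C=(\id A)(\id V)$ in $P(Y)$, which is the key structural input.

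To show $\id A\subseteq\id C$, I would take $\alpha\in\id A$ and chase through these formulas: write $\alpha=\gamma\eta$ for some $\gamma\in\id C$ and $\eta\in\id U$, so $\alpha\le\gamma$; then $\gamma=\delta\epsilon$ for some $\delta\in\id A$ and $\epsilon\in\id V$, so $\gamma\le\epsilon$, giving $\alpha\le\epsilon$ in the semilattice order on $Y$. Now choose $a\in A_\alpha$ and $v\in V$ with $\mathrm{id}(v)=\epsilon$; the product $av$ lies in $AV=C$, and because $\alpha\le\epsilon$ we have $S_\alpha S_\epsilon\subseteq S_{\alpha\epsilon}=S_\alpha$, so $av\in C_\alpha$ and $\alpha\in\id C$. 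The reverse inclusion $\id C\subseteq\id A$ follows by the symmetric chase through $A=CU$, producing an element $cu\in CU=A$ in $S_\gamma$ after noting $\gamma\le\eta$ for some $\eta\in\id U$. I expect no significant obstacle; the only point requiring care is keeping the chain of inequalities in $Y$ straight, after which the existence of the explicit witnesses $av$ and $cu$ in the correct $\mathscr{D}$-classes of $S$ is immediate.
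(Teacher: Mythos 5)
Your proof is correct; note that the paper itself gives no argument here, simply importing the lemma from the proof of \cite[Lemma 2.12]{zgy}, and your reconstruction follows the standard route one would expect there: unpacking $\mathscr{R}$ via $A=BU$, $B=AV$ for part (i), and for part (ii) pushing everything through the homomorphism \eqref{eqn:id} into $P(Y)$ and using the semilattice order ($\alpha=\gamma\eta\le\gamma$, $\gamma=\delta\epsilon\le\epsilon$, hence $\alpha\epsilon=\alpha\in(\id A)(\id V)=\id C$). The only cosmetic remark is that once you have $\alpha\le\epsilon$ the conclusion $\alpha\in\id C$ already follows inside $P(Y)$ from $\id C=(\id A)(\id V)$, so the explicit witnesses $av$ and $cu$ in $S$ are not strictly needed, though they do no harm.
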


\begin{lem}\label{lem:green2}
 Let $ A, B \in P(S) $ be such that
 \begin{enumerate}
     \item for any $a \in A $ there is $ b \in B $ such that  $a \,\mathscr{R} \, b$ in $ S $ and
     \item for any $ b \in B$ there is $ a \in A $ such that $ a \, \mathscr{R} \, b $ in $ S $,
 \end{enumerate}
  then $AS=BS$ and $ \psi(A) S' = \psi(B) S' $.
\end{lem}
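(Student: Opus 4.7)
The plan is to handle the two equalities in sequence: prove $AS = BS$ by a pointwise union argument in $S$, then transport it through $\psi$ to deduce $\psi(A)S' = \psi(B)S'$, with a single identification $\psi(S) = S'$ required to close the gap.

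For $AS = BS$, I would fix $a \in A$ and use hypothesis (i) to choose $b \in B$ with $a\,\mathscr{R}\,b$ in $S$, so that $aS^1 = bS^1$. Complete regularity of $S$ forces $a = a\cdot a^0 \in aS$ and $b = b\cdot b^0 \in bS$, hence $aS^1 = aS$ and $bS^1 = bS$. Therefore $aS = bS \subseteq BS$, and taking the union over $a\in A$ yields $AS \subseteq BS$; hypothesis (ii) furnishes the reverse inclusion by a mirror argument.

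For $\psi(A)S' = \psi(B)S'$, I would apply the isomorphism $\psi$ to the equation $AS = BS$ just obtained, producing $\psi(A)\psi(S) = \psi(AS) = \psi(BS) = \psi(B)\psi(S)$. The remaining move is to substitute $S'$ for the factor $\psi(S)$; this substitution is legitimate exactly when $\psi(S) = S'$, i.e., when the isomorphism $\psi$ carries the universe element of $P(S)$ to the universe element of $P(S')$. Granted that identification, $\psi(A)S' = \psi(A)\psi(S) = \psi(B)\psi(S) = \psi(B)S'$ and the proof is complete.

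The main obstacle is therefore establishing $\psi(S) = S'$. Order-theoretically $S$ is the $\subseteq$-maximum of $P(S)$, but $\subseteq$ is not preserved a priori by a semigroup isomorphism, and the easy algebraic features of $S$ in $P(S)$ — being idempotent, or satisfying $X \subseteq XS \cap SX$ for every $X\in P(S)$ — are shared by proper subsemigroups such as $E(S)$. A working argument must pin $S$ down by a sharper universal property in $P(S)$, and I expect the paper either leans on a standing auxiliary lemma from the surrounding literature or extracts such a property here using the full semigroup structure; this is the single non-routine ingredient in the proof.
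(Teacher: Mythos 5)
Your first half is fine and matches the paper: from $a\,\mathscr{R}\,b$ in $S$ you get $aS=bS$ (the paper quotes Lemma \ref{lem:green1}(i) for this; your route via $aS^1=bS^1$ plus $a=aa^0$ is equivalent), and the union over $a\in A$ together with hypothesis (ii) gives $AS=BS$.

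The second half has a genuine gap, and you have located it yourself: your argument stands or falls with the identity $\psi(S)=S'$, which you do not prove and which the paper never establishes nor uses. Indeed, the paper consistently treats $\psi(S)$ as an element of $P(S')$ that need not equal $S'$ (see, e.g., Lemma \ref{lem:ss0}, which is stated in terms of $s\psi(S)$ rather than $sS'$), so reducing the claim to $\psi(S)=S'$ is the wrong move. The correct route, and the one the paper takes, is to transport the $\mathscr{R}$-relation rather than the equation $AS=BS$: from $a\,\mathscr{R}\,b$ in $S$ one gets $\{a\}\,\mathscr{R}\,\{b\}$ in $P(S)$, hence $\psi(a)\,\mathscr{R}\,\psi(b)$ in $P(S')$ because isomorphisms preserve Green's relations, and then Lemma \ref{lem:green1}(i) applied \emph{inside $P(S')$} yields $\psi(a)S'=\psi(b)S'$ directly, with no mention of $\psi(S)$. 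Pulling this back gives $a\psi^{-1}(S')=b\psi^{-1}(S')$, the same union argument over $a\in A$ and $b\in B$ gives $A\psi^{-1}(S')=B\psi^{-1}(S')$, and applying $\psi$ gives $\psi(A)S'=\psi(B)S'$. So the missing ingredient is not a universal property pinning down $S$ inside $P(S)$; it is the observation that the singleton-level $\mathscr{R}$-equivalence survives the isomorphism and can be exploited on the $S'$ side before taking unions.
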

\begin{proof}
  Let $ a \in A $. Then  $ a \, \mathscr{R} \, b $ in $ S $ for some $ b \in B $. Thus $\{a\} \, \mathscr{R} \, \{b\}$ in $ P(S) $, and so $ \psi(a) \, \mathscr{R} \, \psi(b)$ in $P(S') $. By Lemma \ref{lem:green1},  we have $aS=bS \subseteq BS$ and  $ \psi(a) S' = \psi(b) S'$. Then $AS = \bigcup_{a \in A} a S \subseteq B S$. Dually, we have $BS \subseteq AS$. Hence, $AS=BS$.

Also, since $ \psi(a) S' = \psi(b) S'$, we have  $a \psi^{-1}(S') = b\psi^{-1}(S') $. Then a similar argument establishes that $ A \psi^{-1}(S') = B \psi^{-1}(S') $. Therefore, $ \psi(A) S' = \psi(B) S' $.
\end{proof}

\begin{prop}\label{prop:a22}
 Let $A \in \mathcal{A}_3(S)$. Then $A \in \mathcal{A}_2(S)$ if and only if
 \begin{align}\label{eqn:a23}
 	(\forall B \in P(S))\quad [AS = BS, BA = AB = A] \Longrightarrow B^2 = B.
 \end{align}
\end{prop}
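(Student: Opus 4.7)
The plan is to prove the two implications separately. The forward direction will reduce, via Lemma~\ref{lem:a31}(ii), to showing that $B\subseteq A$, after which breakability of $A$ does the rest. The converse will be a contrapositive: I will invoke Lemma~\ref{lem:a3} to put the only non-breakable element of $\mathcal{A}_3(S)$ into explicit form and then hand-craft a counterexample $B$ out of its top cyclic group of order two.

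For the forward direction assume $A\in\mathcal{A}_2(S)$ and let $B\in P(S)$ satisfy $AS=BS$ and $BA=AB=A$. The key step is to establish $\id B\subseteq\id A$. Applying~\eqref{eqn:id} to $AS=BS$ yields $(\id A)Y=(\id B)Y$, and in the semilattice $Y$ the set $(\id X)Y$ is simply the down-set of $\id X$, so every $\beta\in\id B$ satisfies $\beta\le\alpha$ for some $\alpha\in\id A$. Applying~\eqref{eqn:id} to $BA=A$ gives $(\id B)(\id A)=\id A$, whence $\beta=\beta\alpha\in\id A$. Lemma~\ref{lem:a31}(ii) now forces $B\subseteq A$. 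Because $A$ is breakable, each $A_\alpha$ is a left or right zero semigroup, so $A\subseteq E(S)$; consequently $b=b^2\in B^2$ for every $b\in B$, giving $B\subseteq B^2$, while $(A_2)$ applied to any $b_1,b_2\in B\subseteq A$ yields $b_1b_2\in\{b_1,b_2\}\subseteq B$, giving $B^2\subseteq B$. Hence $B^2=B$.

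For the converse, suppose for contradiction that $A\in\mathcal{A}_3(S)\setminus\mathcal{A}_2(S)$. Lemma~\ref{lem:a3} presents $A$ as $\bigcup_{\alpha\in\id A}A_\alpha$ with $\id A$ a chain having maximum $\omega$, each $A_\alpha$ ($\alpha<\omega$) a left or right zero semigroup, $A_\omega=\{e,a\}$ a cyclic group of order two with $a^2=e$, and $xy=yx=x$ whenever $x\in A_\alpha$, $y\in A_\beta$ with $\alpha<\beta$. Setting $B:=A\setminus\{e\}$, the absorption rule gives $ax=xa=x$ for each $x\in A_\alpha$ with $\alpha<\omega$, while $ea=ae=a$ and $aa=e$ handle the top, and together these show $BA=AB=A$. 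The equality $AS=BS$ follows because the only element of $A\setminus B$ is $e=a\cdot a\in aS\subseteq BS$. Yet $a\cdot a=e\in B^2\setminus B$, so $B^2\ne B$, contradicting~\eqref{eqn:a23}.

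The main obstacle is the very first step: extracting $\id B\subseteq\id A$ from the global identities. It would be tempting to try to deduce this from $BA=AB=A$ alone, but simple two-element chain examples show this fails, so the down-set information provided by $AS=BS$ is essential and must be combined with $(\id B)(\id A)=\id A$ produced by~\eqref{eqn:id}. Once $\id B\subseteq\id A$ is in hand, Lemmas~\ref{lem:a31} and~\ref{lem:a2} carry the rest of the forward direction with no further effort, and on the converse side the entire argument is driven by the structure supplied by Lemma~\ref{lem:a3}.
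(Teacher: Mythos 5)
Your proof is correct and follows essentially the same route as the paper's: the identical $\id$-computation from $AS=BS$ and $BA=AB=A$ yields $\id B\subseteq\id A$, Lemma~\ref{lem:a31}(ii) then gives $B\subseteq A$, breakability of $A$ finishes the forward direction, and the converse uses the same witness $B=A\setminus\{a^0\}$. The only cosmetic difference is that you verify $AS=BS$ for this witness directly from $a^0=a^2$ (so that $a^0S\subseteq aS\subseteq BS$) rather than citing Lemma~\ref{lem:green2}; both are valid.
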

\begin{proof}
 Let $B \in P(S)$ be such that $AS = BS$ and $BA = AB = A$. Then by \eqref{eqn:id}, $(\id A)Y=(\id B)Y$ and $(\id A)(\id B)=\id A$, which gives that $\id B\subseteq (\id B)Y=(\id A)Y$. It follows that for any $\alpha\in \id B$, $\alpha=\beta\gamma$ for some $\beta\in \id A$ and $\gamma \in Y$. So $\alpha=\beta\alpha \in (\id A)(\id B)=\id A$. Thus $\id B\subseteq \id A$. By Lemma \ref{lem:a31}, $ B \subseteq A $.

If $ A \in \mathcal{A}_2(S) $ then by Lemma \ref{lem:a2} we deduce that $ B^2 = B $.
 If $ A \not\in \mathcal{A}_2(S) $, then by Lemma \ref{lem:a3} $ \id A $ has a maximal element $ \omega $ such that $ A_\omega = \{ a, a^0 \} $ is a cyclic group of order two.  Put $ B = A\setminus\{ a^0 \} $. Then $ BA = AB = A $. Also, by Lemma \ref{lem:green2}  $ AS=BS $. But $ B^2 = A\setminus \{ a \} \neq B $.
 Therefore, $ A \in \mathcal{A}_2(S) $ if and only if $ A $ satisfies \eqref{eqn:a23}.
 \end{proof}

\begin{lem}[{\cite[Lemma 2.13]{zgy}}]\label{lem:ss0}
  Let  $ s \in S' $. Then  $ s\psi(S) = s^0\psi(S) $.
\end{lem}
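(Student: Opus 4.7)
The plan is to transport the desired equality back to $P(S)$ by exploiting the fact that $\psi$ preserves Green's relations, and then to apply Lemma~\ref{lem:green1}(i) there. The starting observation is that $s$ lies in the group $\mathscr{H}$-class $H_s(S')$ with identity $s^0$ and inverse $s^{-1}$, so in $P(S')$ one has $\{s^0\}\cdot\{s\} = \{s\}$ and $\{s\}\cdot\{s^{-1}\} = \{s^0\}$. This shows $\{s\}\,\mathscr{R}\,\{s^0\}$ in $P(S')$ (in fact they are $\mathscr{H}$-related, but $\mathscr{R}$ is all that is needed).

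Because $\psi\colon P(S)\to P(S')$ is an isomorphism, Green's $\mathscr{R}$-relation is preserved, so $\psi^{-1}(\{s\})\,\mathscr{R}\,\psi^{-1}(\{s^0\})$ in $P(S)$. Now Lemma~\ref{lem:green1}(i) applies inside $P(S)$ and yields
\[
\psi^{-1}(\{s\})\cdot S \;=\; \psi^{-1}(\{s^0\})\cdot S.
\]
Applying the isomorphism $\psi$ to both sides and using $\psi(A\cdot S)=\psi(A)\cdot\psi(S)$ gives $\{s\}\cdot\psi(S)=\{s^0\}\cdot\psi(S)$, which is the claimed equality $s\psi(S)=s^0\psi(S)$.

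The one point worth flagging, and the reason the argument cannot be executed directly in $P(S')$, is that Lemma~\ref{lem:green1}(i) hinges on the property $X\cdot S\subseteq S$ for every $X\in P(S)$, a feature enjoyed by $S$ viewed as the top element of its own power semigroup. Since $\psi(S)$ is at this stage only some idempotent subset of $S'$ and has not yet been identified with $S'$, the analogue $X\cdot\psi(S)\subseteq\psi(S)$ for arbitrary $X\in P(S')$ is not available; this forces the detour through $P(S)$. Beyond that the argument is routine, so I expect no serious obstacle.
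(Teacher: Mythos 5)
Your argument is correct and is essentially the standard one: establishing $\{s\}\,\mathscr{R}\,\{s^0\}$ in $P(S')$, pulling back through $\psi^{-1}$, applying Lemma~\ref{lem:green1}(i) in $P(S)$, and pushing forward again is precisely the device the paper itself uses (it only cites \cite{zgy} for this lemma, but the same pull-back/push-forward step appears verbatim in its proof of Lemma~\ref{lem:green2}). Your closing observation --- that the detour through $P(S)$ is forced because $\psi(S)$ is not yet known to equal $S'$, so the absorption property underlying Lemma~\ref{lem:green1}(i) is unavailable for $\psi(S)$ inside $P(S')$ --- is exactly the right point to flag.
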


\begin{thm}\label{thm:a24}
  The restriction $ \psi|_{\mathcal{A}_2(S)} $ of $ \psi $ to $ \mathcal{A}_2(S) $ is a bijection from $ \mathcal{A}_2(S) $ onto $ \mathcal{A}_2(S') $.
\end{thm}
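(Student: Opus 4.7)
The plan is to show that $\psi$ carries $\mathcal{A}_2(S)$ into $\mathcal{A}_2(S')$; the reverse inclusion then follows by applying exactly the same argument to the inverse isomorphism $\psi^{-1} \colon P(S') \to P(S)$, and combined with the fact that $\psi$ is already a bijection this yields the claimed bijection. Fix $A \in \mathcal{A}_2(S) \subseteq \mathcal{A}_3(S)$; by Theorem \ref{thm:a35}, $\psi(A) \in \mathcal{A}_3(S')$, so the only task is to exclude the possibility that $\psi(A) \in \mathcal{A}_3(S') \setminus \mathcal{A}_2(S')$.

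I argue by contradiction. Assuming $\psi(A) \in \mathcal{A}_3(S') \setminus \mathcal{A}_2(S')$, Lemma \ref{lem:a3} provides a maximal element $\omega' \in \id \psi(A)$ such that $(\psi(A))_{\omega'} = \{c, c^0\}$ is a cyclic group of order two in $S'$. Mimicking the second half of the proof of Proposition \ref{prop:a22} but carried out inside $S'$, I set $X := \psi(A) \setminus \{c^0\}$; the same reasoning yields $X \psi(A) = \psi(A) X = \psi(A)$ and $X^2 \neq X$, while $XS' = \psi(A) S'$ follows from $c \mathscr{R} c^0$ in $S'$ (so $cS' = c^0 S'$). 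Put $B := \psi^{-1}(X)$; applying $\psi^{-1}$ immediately gives $BA = AB = A$.

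The crux of the argument is to deduce $BS = AS$ in $P(S)$, since this is what permits Proposition \ref{prop:a22} to be invoked for $A \in \mathcal{A}_2(S)$. After applying $\psi$, the requirement becomes $X \psi(S) = \psi(A) \psi(S)$ in $P(S')$. One inclusion is immediate, and the only element of $\psi(A)$ outside $X$ is $c^0$; but Lemma \ref{lem:ss0} applied to $s = c$ gives $c^0 \psi(S) = c \psi(S)$, and since $c \in X$ this quantity is contained in $X \psi(S)$. Hence $\psi(A) \psi(S) = X \psi(S)$, i.e.\ $BS = AS$. Proposition \ref{prop:a22} applied to $A \in \mathcal{A}_2(S)$ now forces $B^2 = B$, and applying $\psi$ yields $X^2 = X$, contradicting the construction of $X$.

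The principal obstacle is the mismatch between the hypothesis $XS' = \psi(A) S'$ (involving multiplication by the ambient $S'$) and the condition $X \psi(S) = \psi(A) \psi(S)$ that naturally arises after transporting $BS = AS$ via $\psi$: we do not yet know that $\psi(S) = S'$, so the two products are a priori different subsets of $S'$. Lemma \ref{lem:ss0} is precisely the tool that bridges this gap, because it lets one replace left multiplication by an element $c$ of $S'$ by left multiplication by its $\mathscr{H}$-class idempotent $c^0$ when the right-hand factor lies in $\psi(S)$.
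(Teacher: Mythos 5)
Your proof is correct and follows essentially the same route as the paper: assume $\psi(A)\in\mathcal{A}_3(S')\setminus\mathcal{A}_2(S')$, delete the idempotent $c^0$ of the top two-element group to form $X=\psi(A)\setminus\{c^0\}$, verify $X\psi(A)=\psi(A)X=\psi(A)$ and $X\psi(S)=\psi(A)\psi(S)$, pull back by $\psi^{-1}$, and invoke Proposition \ref{prop:a22} to force $X^2=X$, a contradiction. The only difference is in one step: where the paper cites Lemma \ref{lem:green2} to get $X\psi(S)=\psi(A)\psi(S)$, you use Lemma \ref{lem:ss0} to absorb the single missing element via $c^0\psi(S)=c\psi(S)$, which is an equally valid and arguably more direct justification.
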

\begin{proof}
  Suppose that $ A \in \mathcal{A}_2(S) $. Then $A \in \mathcal{A}_3(S)$ and so, by Theorem \ref{thm:a35}, $ \psi(A) \in \mathcal{A}_3(S') $. If $ \psi(A) \not \in \mathcal{A}_2(S') $, then by Lemma \ref{lem:a3} $ \psi(A) = \bigcup_{\alpha' \in \id\psi(A)}(\psi(A))_{\alpha'}$, where $ \id \psi(A) $ is a subchain of $ Y' $ with a maximal element $ \omega' $, and that $ (\psi (A))_{\omega'} $ is a cyclic group of order two, say $ (\psi(A))_{\omega'} = \{ a, a^0 \} $.
Let $ X = \psi(A) \setminus \{ a^0 \} $. Then $X^2=\psi(A)\setminus\{a\}$ and  $X \psi(A) = \psi(A)X = \psi(A) $. Also, by Lemma \ref{lem:green2}, it is easy to verify that  $X \psi(S) = \psi(A) \psi(S)$.  Hence, $ \psi^{-1}(X)A = A \psi^{-1}(X) = A$ and $\psi^{-1}(X) S = A S$. Since $A \in \mathcal{A}_2(S)$, it follows from Proposition \ref{prop:a22} that $ \psi^{-1}(X^2) = (\psi^{-1}(X))^2 = \psi^{-1}(X) $.  Hence $ X^2 = X $, a contradiction. Therefore, $ \psi(A) \in \mathcal{A}_2(S') $.

A dual argument shows that $ \psi^{-1}(Y) \in \mathcal{A}_2(S)$ if $ Y \in \mathcal{A}_2(S') $.
\end{proof}

Since $ EP(S) $ is the set of idempotents of $ P(S) $,  the natural partial order  $ \leq $ on $ EP(S) $ is defined by
\[
   A \leq B \Longleftrightarrow  A = AB = BA.
\]
We write $ A < B $ whenever $ A \leq B $ and $ A \neq B $. Following Kobayashi \cite{kobayashi},  for any $ A, B \in EP(S) $, we write $ A \twoheadrightarrow B,  A \rightarrow B, A \prec B $ to mean (respectively) that $ A < B $ and  there exists no $ C \in EP(S),  C \in {\mathcal{A}_2}(S), C \in \overline{\mathcal{A}_2}(S)$ (respectively) such that $ A < C < B $.

\begin{rem}
\begin{enumerate}
\item Since $ \overline{\mathcal{A}_2} (S) \subseteq {\mathcal{A}_2}(S)\subseteq EP(S) $, $ A \twoheadrightarrow B \Longrightarrow  A \rightarrow B \Longrightarrow A \prec B$.
\item If $ A \in {\mathcal{A}_2}(S) $ and $ \alpha, \beta \in \id A $ is such that $ \alpha < \beta$, then $ A_{\alpha} < A_{\beta} $.
\end{enumerate}
\end{rem}

\begin{prop}\label{prop:a25}
Let $ A \in \mathcal{A}_2(S) $ and $ B \in EP(S) $ be such that $ A \leq B $. Then
\begin{enumerate}
\item $ B_{\alpha} \subseteq A_{\alpha} $ for all $ \alpha \in \id A  \cap \id B  $. In particular, if $\id B \subseteq \id A$, then $B \subseteq A$.
\item If $\omega \in \id A \cap \id B $ is maximal  both in $\id A$ and in $\id B$, then $ B_\omega = A_\omega$.
\end{enumerate}
\end{prop}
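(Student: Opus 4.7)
The plan is to prove (i) first and then deduce (ii) from (i) together with the maximality of $\omega$.

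For (i), fix $\alpha \in \id A \cap \id B$ and $b \in B_\alpha$; the goal is to show $b \in A_\alpha$. Picking any $a \in A_\alpha$, the identities $AB = BA = A$ together with $a, b \in S_\alpha$ force $ab, ba \in A \cap S_\alpha = A_\alpha$. Since $A \in \mathcal{A}_2(S) \subseteq \mathcal{A}_3(S)$ is breakable, Lemma \ref{lem:a32} forces $A_\alpha$ to be either a left or a right zero semigroup: part (vi) handles non-maximal $\alpha$, while for a maximal element the cyclic group alternative of part (vii) is excluded by $(A_2)$. Suppose first that $A_\alpha$ is a left zero semigroup; then $A_\alpha$ consists of idempotents lying in a single $\mathscr{L}$-class $L$ of the completely simple $S_\alpha$. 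The standard fact that $ab \mathscr{L} b$ in any completely simple semigroup, combined with $ab \in L$, gives $b \in L$. Since $a$ is an idempotent in $L$ and $b \mathscr{L} a$, the elementary identity $ba = b$ (valid whenever $x \mathscr{L} e$ for an idempotent $e$) yields $b = ba \in A_\alpha$. The right zero case is entirely dual, giving $ab = b \in A_\alpha$. The ``in particular'' clause is then immediate, or alternatively follows directly from Lemma \ref{lem:a31}(ii).

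For (ii), part (i) already gives $B_\omega \subseteq A_\omega$, so only the reverse inclusion remains. Since $\omega$ is maximal in the chain $\id A$, it is the greatest element of $\id A$; combined with maximality of $\omega$ in $\id B$, any $(\alpha, \beta) \in \id A \times \id B$ with $\alpha \beta = \omega$ must satisfy $\alpha = \beta = \omega$. Evaluating the $\omega$-components of $AB = A$ and $BA = A$ therefore yields $A_\omega = A_\omega B_\omega = B_\omega A_\omega$. For any $a \in A_\omega$, in the left zero case write $a = b a'$ with $b \in B_\omega \subseteq A_\omega$ and $a' \in A_\omega$; the left zero law gives $a = b \in B_\omega$. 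The right zero case is treated symmetrically using $A_\omega = A_\omega B_\omega$, so $A_\omega \subseteq B_\omega$.

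The main obstacle is part (i): upgrading the set-level identities $AB = BA = A$ to the element-level containment $B_\alpha \subseteq A_\alpha$. The key observation is that $A_\alpha$, being a left or right zero semigroup of idempotents inside the completely simple $\mathscr{D}$-class $S_\alpha$, lies within a single Green's class; the products $ab$ and $ba$ automatically push $b$ into that same class, and then multiplication by an idempotent $a \in A_\alpha$ fixes $b$, so one of $ab, ba$ coincides with $b$.
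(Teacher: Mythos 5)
Your proof is correct and follows essentially the same route as the paper's: in (i) both arguments observe that the product of $b$ with an element of $A_\alpha$ lands back in $A_\alpha$ and then recover $b$ by multiplying by that idempotent, and in (ii) both evaluate the $\omega$-components of $AB=BA=A$ and finish with the left/right zero law. Two cosmetic remarks: the paper avoids your left/right case split in (i) by writing $b=(ba)^0b$ uniformly (using $(ba)^0\,\mathscr{R}\,b$ in the completely simple $S_\alpha$), and the left-or-right-zero structure of a maximal component is really supplied by Lemma \ref{lem:a32}(v) together with idempotency of the elements of $A$ (or directly by Lemma \ref{lem:a2}), since the hypothesis of part (vii) is vacuous for a breakable $A$.
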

\begin{proof}
(i) Let $ \alpha \in \id A  \cap \id B  $, $a \in A_\alpha$ and $ b \in B_\alpha $. Since $A \leq B$, $b a  \in B_\alpha A_\alpha \subseteq  A_\alpha $. Then $ b a = (ba)^0 $, since $ A \in \mathcal{A}_2(S) $. Hence $ b = (ba)^0 b \in A_\alpha B_\alpha \subseteq A _\alpha$. Therefore, $ B_{\alpha} \subseteq A_{\alpha} $.

(ii) Suppose $\omega\in \id A \cap \id B $ is maximal both in $\id A$ and in $\id B$. Since $A \in \mathcal{A}_2(S)$, from Lemma \ref{lem:a2} we know that $A_\omega$ is either a left or a right zero semigroup and that
\begin{align*}
	A_\omega B_\omega =(AB)_\omega =A_\omega = (BA)_\omega = B_\omega A_\omega.
\end{align*}
By part (i) we have $A_\omega = B_\omega A_\omega B_\omega = B_\omega$.
\end{proof}

By virtue of Proposition \ref{prop:a25}, we can restrict our attention to idempotents of $S$ in the study of order relation on breakable subsemigroups of $S$, and so Proposition \ref{prop:a26}, Theorem \ref{thm:a27} and their proofs, which given by Gan, Zhao and Shao \cite{zhaogs2015} in the context of idempotent semigroups, also valid for  general completely regular semigroups.
\begin{prop}[{\cite[Lemma 2.6]{zhaogs2015}}]\label{prop:a26}
Let $ A\in {\mathcal{A}_2}(S) $ and $ \alpha \in \id A  $. If $\alpha$ is not maximal in $ \id A  $, then $ A \setminus \{a\} \in {\mathcal{A}_2}(S) $ and $ A\twoheadrightarrow A \setminus \{a\}$ for any $ a \in A_{\alpha} $.
\end{prop}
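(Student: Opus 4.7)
My plan is to split the argument into two parts: first verify that $B := A \setminus \{a\}$ lies in $\mathcal{A}_2(S)$ with $A \leq B$ strictly in $EP(S)$; then show no $C \in EP(S)$ strictly separates $A$ and $B$.

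For the first part, any product $b_1 b_2$ with $b_1, b_2 \in B$ lies in $\{b_1, b_2\} \subseteq B$ by the breakability of $A$, so $B$ is a subsemigroup of $S$ satisfying $(A_2)$, hence $B \in \mathcal{A}_2(S)$. To obtain $A = AB = BA$, I use that $\alpha$ is not maximal in $\id A$: pick any $\beta > \alpha$ in $\id A$ and any $c \in A_\beta$; then $c \in B$ and Lemma \ref{lem:a2} gives $ac = ca = a$, so $a \in AB \cap BA$. For $x \in A \setminus \{a\}$, the element $x^0 \in A$ supplied by Lemma \ref{lem:a30}(i) also lies in $B$: if $x \in A_\gamma$ with $\gamma \neq \alpha$ then $x^0 \in A_\gamma$; if $x \in A_\alpha$ then $A_\alpha$ is a left or right zero semigroup, so $x^0 = x \neq a$. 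Hence $x = x x^0 = x^0 x \in AB \cap BA$, establishing $A = AB = BA$ with $A \neq B$.

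For the second part, suppose for contradiction that $C \in EP(S)$ satisfies $A < C < B$. The first step is to pin down $\id C$. The inclusion $C = BC \subseteq AC = A$ (using $B \subseteq A$) gives $C \subseteq A$, hence $\id C \subseteq \id A$. Applying \eqref{eqn:id} to $A = AC$ yields $(\id A)(\id C) = \id A$, which together with $\id C \subseteq \id A$ forces $\id C$ to be upward cofinal in the chain $\id A$: each $\gamma \in \id A$ equals $\min(\xi, \delta)$ for some $\xi \in \id A$, $\delta \in \id C$, so $\delta \geq \gamma$. Combining this with $(\id C)(\id B) = \id C$ (from $C = CB$) shows $\id B \subseteq \id C$: for $\beta \in \id B$, cofinality supplies $\delta \in \id C$ with $\delta \geq \beta$, hence $\beta = \min(\delta, \beta) \in (\id C)(\id B) = \id C$. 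Since $\id B$ is either $\id A$ (when $|A_\alpha| \geq 2$) or $\id A \setminus \{\alpha\}$ (when $A_\alpha = \{a\}$), this leaves $\id C \in \{\id A,\, \id A \setminus \{\alpha\}\}$.

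The next step is to compute each $C_\gamma$ by unpacking $CB = C = BC$ level by level using the absorption rule from Lemma \ref{lem:a2} (products across different levels in the chain collapse to the lower level): for $\gamma \in \id C$ with $\gamma \neq \alpha$ the equations force $A_\gamma = B_\gamma \subseteq C_\gamma \subseteq A_\gamma$, hence $C_\gamma = A_\gamma$, where the case of $\gamma$ maximal in $\id A$ requires playing $CB = C$ against $BC = C$ to cover both the left-zero and right-zero variants of $A_\gamma$; if $\alpha \in \id C$, the level-$\alpha$ analysis produces $A_\alpha \setminus \{a\} \subseteq C_\alpha \subseteq A_\alpha$, leaving $C_\alpha \in \{A_\alpha,\, A_\alpha \setminus \{a\}\}$. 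Combining, $C$ must be either $A$ or $B$, contradicting $A < C < B$; therefore $A \twoheadrightarrow B$. The main obstacle is the $\id C$ bookkeeping, specifically the derivation of $\id B \subseteq \id C$ and the case split around whether $A_\alpha = \{a\}$; once $\id C$ is pinned down, the level-by-level verification is a direct application of the breakable product rules.
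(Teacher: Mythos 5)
Your argument is correct. Note, though, that the paper does not actually prove this proposition: it imports \cite[Lemma 2.6]{zhaogs2015} verbatim and justifies the transfer from bands to completely regular semigroups via Proposition \ref{prop:a25}, which lets one reduce the order-theoretic analysis to idempotents. Your proof is instead a direct, self-contained verification, and it holds up. The first half ($B=A\setminus\{a\}\in\mathcal{A}_2(S)$ and $AB=BA=A$) is routine, using that every element of a breakable semigroup is idempotent and that cross-level products collapse downward. The second half is the substantive part, and your bookkeeping is sound: $C\subseteq A$ follows from $C=BC\subseteq AC=A$; the identity $(\id A)(\id C)=\id A$ on the chain $\id A$ does give upward cofinality of $\id C$, which combined with $(\id C)(\id B)=\id C$ yields $\id B\subseteq\id C$; and the level-by-level step works because for $b\in B_\gamma$ and $c\in C_\gamma$ both $bc$ and $cb$ land in $C_\gamma$, so whichever of the left-zero/right-zero alternatives $A_\gamma$ realizes (Lemma \ref{lem:a32}(v)--(vi)), one of the two products equals $b$. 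The conclusion $C\in\{A,B\}$ then contradicts $A<C<B$. What your route buys is independence from the external reference and from the reduction-to-idempotents step; what it costs is repeating an argument the authors chose to cite. Either way, no gap.
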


\begin{thm}[{\cite[Proposition 3.2]{zhaogs2015}}]\label{thm:a27}
  The restriction $ \psi|_{\overline{\mathcal{A}_2}(S)} $ of $ \psi $ to $ \overline{\mathcal{A}_2}(S) $ is a bijection from $ \overline{\mathcal{A}_2}(S) $ onto $ \overline{\mathcal{A}_2}(S') $.
\end{thm}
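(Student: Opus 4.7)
The plan is to characterize $\overline{\mathcal{A}_2}(S)$ inside $\mathcal{A}_2(S)$ by a condition expressible purely in terms of the set $\mathcal{A}_2(S)$ and the natural partial order $\le$ on $EP(S)$, both of which $\psi$ preserves. The bijection $\psi|_{\mathcal{A}_2(S)}\colon \mathcal{A}_2(S)\to\mathcal{A}_2(S')$ is already available from Theorem \ref{thm:a24}, and $\psi$, being a multiplicative isomorphism, automatically preserves $\le$ and hence the covering relations $\twoheadrightarrow$ (in $EP$) and $\to$ (in $\mathcal{A}_2$). Consequently, any intrinsic $P(S)$-theoretic characterization of $\overline{\mathcal{A}_2}(S)$ will transfer along $\psi$ and give the theorem at once.

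The feature that should distinguish $\overline{\mathcal{A}_2}(S)$-elements from general $\mathcal{A}_2(S)$-elements is supplied by Proposition \ref{prop:a26}: whenever $A\in\mathcal{A}_2(S)$ has $|\id A|>1$, removing any element $a$ from a non-maximal level of $\id A$ produces an $\mathcal{A}_2$-cover $A\twoheadrightarrow A\setminus\{a\}$ that sits entirely inside $\mathcal{A}_2(S)$. Such an ``internal'' cover is unavailable when $A\in\overline{\mathcal{A}_2}(S)$, where only one level is present and any $B$ with $A<B$ must come from ``external'' extensions (for instance, adjoining idempotents from strictly higher levels that absorb $A$). Proposition \ref{prop:a25} supplies the rigidity needed to rule out disguised internal covers: any $B\le A$ in $EP(S)$ with $A\in\mathcal{A}_2(S)$ satisfies $B_\alpha\subseteq A_\alpha$ for $\alpha\in\id A\cap\id B$, with equality forced at a mutually maximal level. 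Because each $a$ in a breakable semigroup satisfies $a^2=a$ (so every $A\in\mathcal{A}_2(S)$ is a sub-band of $S$), the detailed order-theoretic analysis already carried out for bands in \cite[Proposition 3.2]{zhaogs2015} applies verbatim and extracts the required intrinsic characterization.

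With the characterization in place, one reads off $A\in\overline{\mathcal{A}_2}(S)$ iff $\psi(A)\in\mathcal{A}_2(S')$ satisfies the corresponding condition iff $\psi(A)\in\overline{\mathcal{A}_2}(S')$, and the symmetric argument applied to $\psi^{-1}$ completes the proof. The main obstacle is pinning down the precise intrinsic condition that captures $|\id A|=1$ in order-theoretic language; this is the content of the cited band-paper result, and the role of Proposition \ref{prop:a25} here is exactly to guarantee that the reduction to idempotent subsemigroups needed for the band-case argument remains valid in the general completely regular setting. Everything else amounts to translating the order-theoretic characterization across $\psi$ using Theorem \ref{thm:a24} and the preservation of $\le$.
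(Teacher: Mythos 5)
Your proposal matches the paper's treatment: the paper gives no independent proof of this theorem but justifies it exactly as you do, by observing that every member of $\mathcal{A}_2(S)$ consists of idempotents and that Proposition \ref{prop:a25} lets the order-theoretic analysis of breakable subsemigroups against arbitrary elements of $EP(S)$ reduce to the idempotent case, so that the proof of \cite[Proposition 3.2]{zhaogs2015} for bands carries over verbatim, with the transfer along $\psi$ resting on Theorem \ref{thm:a24} and the preservation of $\le$ and the covering relations. This is essentially the same approach, so no further comparison is needed.
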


\section{On the structure semilattice of $S$}

In this section, we show that there is a semilattice isomorphism $\theta$ from $Y$ onto $Y'$ such that the restriction $\psi|_{P(S_\alpha)}$ of $\psi$ to $P(S_\alpha)$ is an isomorphism from $P(S_\alpha)$ onto $P(S'_{\theta(\alpha)})$ for all $\alpha \in Y$. .

Gould and Iskra gave the following description of the maximal subgroup $\mathscr{H}$-class of $P(S)$ containing a singleton $\{e\}$, where $e \in E(S)$.
\begin{lem}[{\cite[Lemma 2.1]{gould1984b}}]\label{lem:hclass1}
 Let $ S $ be a semigroup and $ e \in E(S) $. Then $ H_e(P(S)) = H_e(S) $.
\end{lem}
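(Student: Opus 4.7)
The plan is to prove the two set-theoretic inclusions $H_e(S) \subseteq H_e(P(S))$ and $H_e(P(S)) \subseteq H_e(S)$ separately, under the standard identification $s \leftrightarrow \{s\}$ between $S$ and the singletons of $P(S)$. The forward direction will be immediate: any $x \in H_e(S)$ satisfies $xe = ex = x$ and admits a group-theoretic inverse $y \in H_e(S)$ with $xy = yx = e$; transferring these equalities to singletons yields $\{x\}\{e\} = \{e\}\{x\} = \{x\}$ and $\{x\}\{y\} = \{y\}\{x\} = \{e\}$, which places $\{x\}$ in the group $\mathscr{H}$-class $H_{\{e\}}(P(S))$.

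For the converse, I would take $A \in H_{\{e\}}(P(S))$ and exploit that $\{e\}$ is an idempotent of $P(S)$, so its $\mathscr{H}$-class is a group with identity $\{e\}$. Choosing the group inverse $B$ of $A$ gives $A\{e\} = \{e\}A = A$ together with $AB = BA = \{e\}$; unpacking the last two as set products forces $ab = ba = e$ for every $a \in A$ and $b \in B$. The crucial step is to reduce $A$ to a singleton. Fixing any $a_0 \in A$ and $b_0 \in B$, I have $Ab_0 \subseteq AB = \{e\}$ and $Ab_0 \neq \emptyset$, hence $Ab_0 = \{e\}$; together with $b_0 a_0 = e$ this gives
\[
A \;=\; A\{e\} \;=\; A\{b_0 a_0\} \;=\; (Ab_0)\{a_0\} \;=\; \{e\}\{a_0\} \;=\; \{e a_0\},
\]
so $A$ is a singleton, and the symmetric computation shows the same for $B$. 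Writing $A = \{x\}$ and $B = \{y\}$, the four defining equalities collapse to $xe = ex = x$ and $xy = yx = e$ inside $S$, which is exactly the assertion $x \in H_e(S)$.

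The main, and essentially the only, obstacle will be this singleton reduction: the naive worry is that an $\mathscr{H}$-related partner of $\{e\}$ might be a genuinely multi-element subset whose elements together sit on $e$. The one-line computation above kills that possibility because $Ab_0$ already collapses to $\{e\}$, making every pre-multiplication by $A$ pass through a single element. Once the singleton reduction is in hand, the translation between identities on singletons in $P(S)$ and identities on their representatives in $S$ is automatic, and the two inclusions close off the proof.
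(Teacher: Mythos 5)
Your argument is correct. Note, however, that the paper does not prove this lemma at all: it is quoted verbatim from Gould and Iskra (\cite[Lemma 2.1]{gould1984b}), so there is no internal proof to compare against. Your two inclusions are both sound. The forward direction is the routine transfer of the relations $xe=ex=x$, $xy=yx=e$ to singletons, which pins $\{x\}$ into the group $\mathscr{H}$-class of the idempotent $\{e\}$. The converse correctly uses that $H_{\{e\}}(P(S))$ is a group with identity $\{e\}$, so a partner $B$ with $AB=BA=\{e\}$ and $A\{e\}=\{e\}A=A$ exists; the collapse $AB=\{e\}$ forces every product $ab$ to equal $e$, and your computation $A=A\{b_0a_0\}=(A\{b_0\})\{a_0\}=\{e\}\{a_0\}$ is exactly the right way to reduce $A$ to a singleton. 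This is the standard proof of the Gould--Iskra lemma, and it stands on its own as a complete substitute for the citation.
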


Now we consider the maximal subgroup $\mathscr{H}$-class of $P(S)$ containing a left zero subsemigroup of $S$.
\begin{lem}\label{lem:hclass2}
 Let $ E $ be a left zero subsemigroup of $ S $ and let $ e \in E $. Then $ H_E(P(S)) = \{ Ea \mid a \in H_e(S) \} $.
\end{lem}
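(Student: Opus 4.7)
The plan is to prove the two inclusions of the asserted equality separately. For the inclusion $\{Ea \mid a \in H_e(S)\} \subseteq H_E(P(S))$, I would first record the two left-zero identities $fe = f$ and $ef = e$ for all $f \in E$, which give $Ee = E$ and $eE = \{e\}$; since $a = ae$ for $a \in H_e(S)$, one obtains $af = (ae)f = a(ef) = ae = a$, so $aE = \{a\}$. Hence $(Ea)(Eb) = E(aE)b = Eab$, $(Ea)(Ea^{-1}) = Ee = E$, and the symmetric relation; so $\{Ea \mid a \in H_e(S)\}$ is a subgroup of $P(S)$ with identity $E$ and therefore lies in the maximal subgroup $H_E(P(S))$.

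For the reverse inclusion, let $B \in H_E(P(S))$. Since $B \mathscr{D} E$ in $P(S)$, Lemma \ref{lem:green1}(ii) gives $\id B = \id E = \{\alpha\}$, where $\alpha$ is the $\mathscr{D}$-class of $e$ in $Y$, so $B \subseteq S_\alpha$. The group equation $BE = B$ forces every $b \in B$ to be of the form $b'f$ with $b' \in B$ and $f \in E$; because $b', f, b$ all lie in $S_\alpha$ and the $\mathscr{L}$-preorder on a completely simple semigroup coincides with $\mathscr{L}$, one gets $b \mathscr{L} f$ in $S_\alpha$, and combined with $f \mathscr{L} e$ (from $fe = f$, $ef = e$) this yields $B \subseteq L_e$. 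In particular $be = b$ for every $b \in B$, and the classical fact $e L_e \subseteq H_e(S)$ shows that $eB \subseteq H_e(S)$.

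The key step, which I expect to be the main hurdle, is to introduce the map $\phi \colon H_E(P(S)) \to P(H_e(S))$ defined by $\phi(X) = eX$. It is well-defined by the previous paragraph, and because $X_1 e = X_1$ for every $X_1 \in H_E(P(S))$ (from $X_1 \subseteq L_e$), the calculation
\[
\phi(X_1)\phi(X_2) = (eX_1)(eX_2) = eX_1 e X_2 = e X_1 X_2 = \phi(X_1 X_2)
\]
shows that $\phi$ is a semigroup homomorphism, with $\phi(E) = eE = \{e\}$. Thus the image of $\phi$ is a subgroup of $P(H_e(S))$ with identity $\{e\}$ and so is contained in the maximal subgroup of $P(H_e(S))$ at $\{e\}$, which by Lemma \ref{lem:hclass1} applied to the group $H_e(S)$ equals $H_e(H_e(S)) = H_e(S)$ (viewed as singletons). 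Consequently $\phi(B) = \{a\}$ for some $a \in H_e(S)$, i.e.\ $eB = \{a\}$, and finally
\[
B = EB = (Ee) B = E(eB) = E\{a\} = Ea,
\]
which finishes the proof.
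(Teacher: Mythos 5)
Your proof is correct, and the first inclusion is essentially the paper's own argument (both reduce to $aE=\{a\}$ for $a\in H_e(S)$ and then verify the group identities $(Ea)(Ea^{-1})=E$). For the reverse inclusion, however, you take a genuinely different route. The paper works element by element: writing $A\in H_E(P(S))$ and taking its group inverse $B$ with $AB=BA=E$, it shows that every $a\in A$ lies in some $H_f(S)$ with $f\in E$, that every $f\in E$ is so represented, and then uses cancellation in the group $H_f(S)$ (via $a_1b=a_2b=f$) to prove $|A\cap H_f(S)|=1$, whence $A=\bigcup_{f\in E}\{fa\}=Ea$. You instead observe that every member of $H_E(P(S))$ is contained in $L_e$, so that $X\mapsto eX$ is a well-defined homomorphic retraction of $H_E(P(S))$ into $P(H_e(S))$ sending $E$ to $\{e\}$; since homomorphic images of groups are groups, the image lands in the maximal subgroup of $P(H_e(S))$ at $\{e\}$, which Lemma \ref{lem:hclass1} (applied to the group $H_e(S)$) identifies with the singletons, and $B=E(eB)$ recovers the result. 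Your argument is more structural: it reuses Lemma \ref{lem:hclass1} as a black box and avoids the counting step, at the cost of needing the (true, in the completely simple component $D_e$) facts that $bf=b'f$ forces $b\,\mathscr{L}\,f$ and that $eL_e\subseteq H_e(S)$; the paper's argument is more self-contained and makes the fibration of $A$ over $E$ explicit. Both are valid and of comparable length.
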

\begin{proof}
Clearly, $E=Ee $ is idempotent. Let $a \in H_e(S)$. Then	$E(Ea) = E^2a = Ea$.
 Since $ ae = a $ and $E$ is a left zero semigroup, $aE=aeE=\{ae\}=\{a\}$. Thus $(Ea)E = E(aE) = E a$ and
\[
	(Ea)(Ea^{-1}) = E(aE)a^{-1} = E(aa^{-1}) = Ee = E.
\]
Similarly, $(Ea^{-1})(Ea)=E$.
Thus $Ea \, \mathscr{H} \, E $ in $P(S)$. Therefore,
\[
	\{Ea \mid a \in H_e(S) \} \subseteq H_E(P(S)).
\]

Conversely, if $ A \in H_E(P(S) )$, then   $AE = EA = A$ and there exists $B \in H_E(P(S))$ such that $AB = BA = E$. By Lemma \ref{lem:green1}(ii), $\id A  = \id B  = \id E$. That is, every elements in  $H_E(P(S))$ is contained in the same component $D_e$.

Let $a \in A$. Since $AE=EA=A$, $a=fa_1=a_2g$ for some $a_1, a_2 \in A$ and $f, g\in E$. Thus $f\mathscr{L}g\mathscr{L} a \mathscr{R}f$.  Hence  $a \in H_f(S)$.
On the other hand, for any $f \in E$, $f=ab$ for some $a\in A$ and $b\in B$, since $AB=E$. Thus $f\mathscr{R} a$ and then $f \in H_a(S)$. To summarize, we have shown that
\begin{align}\label{eqn:hclass3}
	(\forall a\in A)(\exists f\in E)a\in H_f(S)\quad \text{and}\quad (\forall f\in E)(\exists a\in A)f\in H_a(S).
\end{align}

Let  $a_1, a_2 \in A \cap H_f(S)$ and $b \in B \cap H_f(S)$. Since $AB = E$, both $a_1 b$ and $ a_2 b $ are  in $ H_f(S) \cap E=\{f\}$, giving that $a_1b = a_2 b =f$. Hence $a_1=a_2$. Consequently, $| A \cap H_f(S)| =1$ for all $f\in E$. In particular,  $| A \cap H_e(S)|=1$, say $A\cap H_e(S)=\{a\}$. Since
\[
	fa \in EA \cap H_f(S)=A \cap H_f(S),
\]
we have $A\cap H_f(S)=\{fa\} $.
This, together with \eqref{eqn:hclass3}, implies that
\[
	A=\bigcup_{f\in E} (A\cap H_f(S))=\bigcup_{f\in E} \{ fa \}=Ea.
\]
Therefore,  $H_E(P(S)) \subseteq \{Ea \mid a \in H_e(S)\}$.
\end{proof}

\begin{thm}\label{thm:theta}
  There exists a semilattice isomorphism $ \theta $ from $ Y $ onto $ Y' $ such that $ \psi|_{P(S_\alpha)} $ is an isomorphism from $ P(S_\alpha) $ onto $ P(S'_{\theta(\alpha)}) $ for all $ \alpha \in Y $.
\end{thm}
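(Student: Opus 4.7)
My plan is to construct $\theta$ via the correspondence on $\overline{\mathcal{A}_2}(S)$ given by Theorem~\ref{thm:a27}, verify it is a semilattice isomorphism, and then extend the identification from idempotent singletons to all of $P(S_\alpha)$.

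To construct $\theta$, pick any idempotent $e \in E(S_\alpha)$; the singleton $\{e\}$ belongs to $\overline{\mathcal{A}_2}(S)$, so $\psi(\{e\}) \in \overline{\mathcal{A}_2}(S')$ and $\id\psi(\{e\})$ is a singleton, whose unique element I define to be $\theta(\alpha)$. The choice of $e$ is immaterial: the rectangular band structure of $E(S_\alpha)$ gives $\{e\} \mathscr{R} \{ef\} \mathscr{L} \{f\}$ in $P(S)$ for any $e, f \in E(S_\alpha)$, hence $\{e\} \mathscr{D} \{f\}$; applying $\psi$ and Lemma~\ref{lem:green1}(ii) then forces $\id\psi(\{e\}) = \id\psi(\{f\})$.

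To see $\theta$ is a semilattice homomorphism, pick $e \in E(S_\alpha)$ and $f \in E(S_\beta)$; then $ef \in S_{\alpha\beta}$ and $\{ef\} \mathscr{H} \{(ef)^0\}$ in $P(S)$ via the inverses in the group $H_{ef}$, so combining Lemma~\ref{lem:green1}(ii) with \eqref{eqn:id} yields
\[
\{\theta(\alpha\beta)\} = \id\psi(\{(ef)^0\}) = \id\psi(\{ef\}) = \id\psi(\{e\})\cdot\id\psi(\{f\}) = \{\theta(\alpha)\theta(\beta)\}.
\]
Bijectivity follows from the symmetric construction of $\theta'\colon Y' \to Y$ via $\psi^{-1}$: picking any idempotent $e' \in \psi(\{e\})$, available because $\psi(\{e\})$ is a left- or right-zero semigroup of idempotents, and observing $\{e\} \mathscr{D} \psi^{-1}(\{e'\})$ in $P(S)$ shows that $\theta'$ is a two-sided inverse of $\theta$.

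The main obstacle is extending the identification to arbitrary $A \in P(S_\alpha)$, that is, proving $\id A = \{\alpha\}$ forces $\id\psi(A) = \{\theta(\alpha)\}$; the singleton case does not transfer directly because $\psi$ is only required to preserve the semigroup operation, not set-theoretic inclusion $\{a\} \subseteq A$. My plan here is to analyse, for each $a \in A$, the sandwich
\[
\id\bigl(\psi(\{a\})\psi(A)\psi(\{a\})\bigr) = \{\theta(\alpha)\}\cdot\id\psi(A)\cdot\{\theta(\alpha)\} = \{\theta(\alpha)\gamma : \gamma \in \id\psi(A)\}
\]
(the second equality using that $Y'$ is a semilattice), combine this with the dual conclusion from applying the same reasoning to $\psi^{-1}$ on each singleton $\{b\} \subseteq \psi(A)$ (yielding $\psi^{-1}(\{b\}) \in \overline{\mathcal{A}_2}(S)$ at $\id = \{\theta^{-1}(\gamma)\}$ when $b \in S'_\gamma$), and then invoke Proposition~\ref{prop:a25} to convert these multiplicative constraints on idempotent subsets into order constraints that squeeze every $\gamma \in \id\psi(A)$ down to $\theta(\alpha)$. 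Once the identification holds for every $A \in P(S_\alpha)$, $\psi|_{P(S_\alpha)}$ maps into $P(S'_{\theta(\alpha)})$, the symmetric argument for $\psi^{-1}$ supplies surjectivity, and the homomorphism property is inherited from $\psi$.
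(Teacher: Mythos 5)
Your construction of $\theta$ on idempotent singletons, the verification that it is well defined and a semilattice isomorphism, and the reduction of the theorem to the claim that $\id A=\{\alpha\}$ forces $\id\psi(A)=\{\theta(\alpha)\}$ are all sound and consistent with the paper. The problem is that this last claim is the real content of the theorem, and your proposed ``sandwich plus dual plus Proposition~\ref{prop:a25}'' squeeze does not close it. First, the sandwich only yields $\id\psi(aAa)=\theta(\alpha)\cdot\id\psi(A)$, i.e.\ it relates one unknown to another: since $S_\alpha$ is completely simple, $aAa$ is a subset of the group $H_a(S)$, and $\id\psi(B)$ for $B$ a subset of a subgroup is exactly as unknown as $\id\psi(A)$ at this stage (if $S_\alpha$ is itself a single group the sandwich is vacuous). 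The paper has to work hardest for precisely this case: it first proves $\psi(H_e(S))=\psi(e)H_{e'}(S')$ using the explicit description of the $\mathscr{H}$-class of $P(S)$ containing a left zero subsemigroup (Lemma~\ref{lem:hclass2}), and only then handles $H_a(S)$, subsets of $H_a(S)$, $S_\alpha$ itself, and finally general $A$. Nothing in your outline substitutes for this step.

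Second, the ``dual conclusion'' step fails for the very reason you flag at the outset: $\psi^{-1}$ does not preserve set-theoretic inclusion, so from $\{b\}\subseteq\psi(A)$ you obtain no multiplicative relation between $\psi^{-1}(b)$ and $A$, and knowing $\id\psi^{-1}(b)=\{\theta^{-1}(\gamma)\}$ tells you nothing about how $\gamma$ compares with $\theta(\alpha)$. Third, Proposition~\ref{prop:a25} requires one of the two sets to lie in $\mathcal{A}_2(S)$ and an order relation between idempotents of $P(S)$ to be already in hand; an arbitrary $A\in P(S_\alpha)$ need not even be idempotent, so the proposition is not applicable. The paper instead obtains the two halves of the squeeze from (a) $S_\alpha A S_\alpha=S_\alpha$ together with the already-established case $A=S_\alpha$, giving $\theta(\alpha)\le\beta'$ for all $\beta'\in\id\psi(A)$, and (b) a left zero subsemigroup $D$ meeting every $\mathscr{R}$-class of $A$ together with Lemma~\ref{lem:green2}, giving $(\id\psi(A))Y'=\theta(\alpha)Y'$ and hence $\beta'\le\theta(\alpha)$. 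You would need to supply arguments of this kind, and in particular the $H$-class analysis, for the proof to go through.
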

\begin{proof}
  Let $ \alpha \in Y $ and $ e \in E(S_\alpha) $. Since $ \{ e \} \in \overline{\mathcal{A}_2}(S) $, it follows from Theorem \ref{thm:a27} that $ \psi (e) \in \overline{\mathcal{A}_2}(S) $. Thus $ \id \psi(e) = \{  \alpha ' \} $ for some $ \alpha' \in Y' $. In the following, we show that $ \id \psi(A) = \{ \alpha' \} $ for all $ A \in P(S_\alpha) $.  Consider the following possibilities of $ A $.

 \emph{Case 1: $ A = \{ a \} $ where $ a \in S_\alpha $.}
 Since  $a \in D_e(S)$, $\{e\}  \mathscr{D}  \{a\} $ in $P(S)$. Thus $ \psi(e)  \mathscr{D}  \psi(a) $ in $ P(S') $. By Lemma \ref{lem:green1}, $ \id\psi(a) = \id \psi(e) = \{\alpha'\} $.

\emph{Case 2: $ A $ is a left zero subsemigroup of $ S_\alpha $}.
Take $a \in A$.  Since $A$ is a left zero semigroup, $ aA = \{a\} $ and $Aa = A$. That is, $A\mathscr{R}\{a\}$ in $P(S)$. Then $ \psi(A) \mathscr{R} \psi(a) $ in $P(S')$.  By Lemma \ref{lem:green1} and case 1, $ \id\psi(A) = \id \psi(a) = \{\alpha'\} $.

\emph{Case 3: $A=H_e(S)$.} Without loss of generality, we may assume that $\psi(e)$ is a left zero semigroup. Take $e'\in \psi(e)$. We show that $\psi(H_e(S))=\psi(e)H_{e'}(S')$ in the following.

Let $s \in H_{e'}(S')$. Then by Lemma \ref{lem:hclass2},  $\psi(e)s \mathscr{H} \psi(e)$ in $P(S')$ and so, $e\psi^{-1}(s) \mathscr{H} \{e\}$ in $P(S)$. By Lemma \ref{lem:hclass1}, $e\psi^{-1}(s) \in {H}_e(S)$. Thus, $e\psi^{-1}(s)H_e(S)=H_e(S)$ and then $\psi(e)s\psi(H_e(S))=\psi(H_e(S))$. Therefore, we have
\begin{align}\label{eqn:hclass5}
	\psi(e)H_{e'}(S') \psi(H_e(S))=\bigcup_{s\in H_{e'}(S')}\psi(e)s\psi(H_e(S))=\psi(H_e(S)).
\end{align}

On the other hand, let $a \in H_e(S)$. By Lemma \ref{lem:hclass1} we have $\{a\} \mathscr{H}\{ e\}$ in $P(S)$.  Thus $\psi(a) \mathscr{H} \psi(e)$ in $P(S')$. By  Lemma \ref{lem:hclass2}, $\psi(a) =\psi(e)s$ for some $s \in H_{e'}(S')$. Then
\[
	H_{e'}(S')\psi(a)= H_{e'}(S')\psi(e)s= H_{e'}(S').
\]
Hence, $ \psi^{-1}(H_{e'}(S'))a = \psi^{-1}(H_{e'}(S'))$ and then
\[
	 \psi^{-1}(H_{e'}(S'))H_e(S)=\bigcup_{a\in H_e(S)} \psi^{-1}(H_{e'}(S'))a= \psi^{-1}(H_{e'}(S')).
\]
This implies that $H_{e'}(S')\psi(H_e(S))=H_{e'}(S')$ and then
\begin{align}\label{eqn:hclass6}
	\psi(e)H_{e'}(S')\psi(H_e(S))=\psi(e)H_{e'}(S').
\end{align}

By \eqref{eqn:hclass5} and \eqref{eqn:hclass6}, we have $\psi(H_e(S))=\psi(e)H_{e'}(S')$. Thus
\[
	 \id\psi(H_e(S)) = \id\left( {\psi(e)}H_{e'}(S') \right) = \{ \alpha' \}.
\]

 \emph{Case 4: $ A = H_a (S) $ for some $ a \in S_\alpha $}.
Since $ a \, \mathscr{D} \, e$ in $S$, it is easy to verify $ H_a(S)  \mathscr{D}  H_e(S) $ in $P(S)$, and so $ \psi(H_a(S))  \mathscr{D}  \psi(H_e(S)) $ in $P(S')$.
By Lemma \ref{lem:green1} and case 3,
\[
	\id\psi(H_a(S)) =\id \psi(H_e(S)) = \{\alpha'\}.
\]

\emph{Case 5: $ A \subseteq H_a(S) $ for some $ a \in S_\alpha $.}
Clearly, $ A H_a(S) = H_a(S) $ and $ A a^0 = A $. Then $ \psi(A) \psi(H_a(S)) = \psi(H_a(S)) $ and $ \psi(A) \psi(a^0) = \psi(A) $. By cases 1 and 4, $\id\psi(a^0)=\id \psi(H_a(S))=\{\alpha'\}$,  we have
\begin{align*}
  \id\psi(A) = (\id\psi(A))(\id\psi(a^0)) = (\id\psi(A)) (\id\psi(H_a(S))) = \id \psi(H_a(S)) = \{ \alpha' \}.
\end{align*}

\emph{Case 6: $ A = S_\alpha $.}
Since $e S_\alpha e = H_e(S) $ and $S_\alpha e S_\alpha = S_\alpha $, $ \psi(e) \psi(S_\alpha) \psi(e) =  \psi(H_e(S)) $ and $ \psi(S_\alpha) \psi(e) \psi(S_\alpha) = \psi(S_\alpha) $.
Thus,
\[
	\{\alpha'\}=\id\psi(H_e(S))=(\id\psi(e)) (\id\psi(S_\alpha)) (\id\psi(e))= \alpha' (\id\psi(S_\alpha)) \alpha'=(\id\psi(S_\alpha)) \alpha'
\]
 and so
\begin{alignat*}{2}
	\id\psi(S_\alpha) &= (\id\psi(S_\alpha)) (\id\psi(e)) (\id\psi(S_\alpha)) = (\id\psi(S_\alpha)) \alpha' (\id\psi(S_\alpha))  \\
	                                          &= \left( ((\id\psi(S_\alpha)) \alpha'  \right)^2 = \{\alpha'\}^2 = \{\alpha'\}.
\end{alignat*}

\emph{The general case.} Notice  that $ S_\alpha A S_\alpha = S_\alpha $, we have $\psi(S_\alpha) \psi(A) \psi(S_\alpha) = \psi(S_\alpha) $.  Then by case 6,
\[
	\alpha'\left( \id\psi(A) \right) \alpha' =(\id\psi(S_\alpha)) (\id\psi(A)) (\id\psi(S_\alpha))= \id\psi(S_\alpha) = \{\alpha'\}.
\]
This implies that $\alpha' \leq \beta'$ for all $ \beta' \in \id\psi(A) $.
 Now let $ D $ be a left zero subsemigroup of $ S_\alpha $ such that $ \ R_f(S) \cap A \neq \emptyset $ for all $ f \in D$  and  $R_a(S) \cap D \neq \emptyset$ for all $ a \in A$.
By Lemma \ref{lem:green2}, we have $ \psi(D) S' = \psi(A) S' $. It follows by case 2 that
\[
     (\id\psi(A)) Y' = (\id\psi(A)) (\id S') = (\id\psi(D)) (\id S') = \alpha' Y'.
\]
Then $ \id\psi(A) \subseteq (\id\psi(A)) Y' = \alpha' Y' $. From this we deduce that $\beta' \leq \alpha'$ for all $\beta' \in \id\psi(A)$. This, together with the already noted fact that $\alpha' \leq \beta'$ for all $ \beta' \in \id\psi(A) $, implies that $ \id\psi(A) = \{ \alpha' \} $.

We have shown that $ \id\psi(A) = \{ \alpha' \} $ for all $ A \in P(S_\alpha) $. Therefore, the restriction $ \psi|_{P(S_\alpha)}$ of $ \psi $ to $ P(S_\alpha) $ is a mapping from $ P(S_\alpha) $ to $ P(S'_{\alpha'}) $. Dually, by interchanging the roles of $ S $ and $ S' $, and instead of $ \psi $ by $ \psi^{-1}$, we obtain that the restriction $ \psi^{-1}|_{P(S'_{\alpha'})} $ of $ \psi^{-1} $ to $ P(S'_{\alpha'}) $ is a mapping from $ P(S'_{\alpha'}) $ to $ P(S_\alpha) $. Therefore, $ \psi|_{P(S_\alpha)} $ is an isomorphism from $ P(S_\alpha) $ onto $ P(S'_{\alpha'})$.

Define  $ \theta \colon Y \to Y' $ by $\theta(\alpha) = \id\psi(S_\alpha)$. The conclusion of the previous paragraph implies that $ \theta $ is well defined and is bijective. Moreover, $ \theta $ is a homomorphism. In fact, since $S_\alpha S_\beta \subseteq S_{\alpha\beta} $, we have
\begin{align*}
	\theta(\alpha\beta) = \id\psi(S_{\alpha\beta})=\id\psi(S_\alpha S_\beta)= \id(\psi(S_\alpha)\psi(S_\beta)) = (\id\psi(S_\alpha)) (\id\psi(S_\beta))= \theta(\alpha) \theta(\beta).
\end{align*}
Thus, $\theta$ is an isomorphism from semilattice $Y$ onto semilattice $Y'$ such that the restriction $\psi|_{P(S_\alpha)}$ of $\psi$ to $P(S_\alpha)$ is an isomorphism from $P(S_\alpha)$ onto $P(S'_{\theta(\alpha)})$.
\end{proof}
In the remainder of the paper, $ \theta $ always means the semilattice isomorphism from $ Y $ onto $ Y' $ as in Theorem \ref{thm:theta}.

\section{On the singleton sets of $S$}
In this section, we consider  images of  singleton sets of $S$.
By Proposition \ref{prop:cs0} and Theorem \ref{thm:theta}, we conclude that
\begin{prop}\label{prop:cs01}
If $S_\alpha\in \mathcal{CS}_0$, then the restriction of $\psi|_{S_{\alpha}}$ of $\psi$ to $S_\alpha$ is an isomorphism from $S_\alpha$ onto $S'_{\theta(\alpha)}$. Thus, if $D_a \in \mathcal{CS}_0$ for some $a\in S$, then $\psi(a)$ is a singleton.
\end{prop}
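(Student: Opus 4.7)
The plan is to combine Theorem~\ref{thm:theta} with Proposition~\ref{prop:cs0}, the only missing ingredient being that $S'_{\theta(\alpha)}$ also belongs to $\mathcal{CS}_0$.

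First I would fix $\alpha \in Y$ with $S_\alpha \in \mathcal{CS}_0$ and invoke Theorem~\ref{thm:theta} to obtain the isomorphism $\psi|_{P(S_\alpha)} \colon P(S_\alpha) \to P(S'_{\theta(\alpha)})$. Since $S'_{\theta(\alpha)}$ is a $\mathscr{D}$-class of the completely regular semigroup $S'$, it is automatically completely simple, so the only thing to verify is that $S'_{\theta(\alpha)}$ is neither a left nor a right zero semigroup. Here I would argue by contradiction: if $S'_{\theta(\alpha)}$ were a left zero semigroup, then $AB = A$ for all $A,B \in P(S'_{\theta(\alpha)})$, hence $P(S'_{\theta(\alpha)})$ would itself be a left zero semigroup. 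Transporting this property back along the isomorphism $\psi|_{P(S_\alpha)}$ would force $P(S_\alpha)$ to be a left zero semigroup, and in particular $\{a\}\{b\} = \{ab\} = \{a\}$ for all $a,b \in S_\alpha$, contradicting $S_\alpha \notin$ (left zero semigroups). The right zero case is symmetric.

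With $S_\alpha, S'_{\theta(\alpha)} \in \mathcal{CS}_0$ in hand, Proposition~\ref{prop:cs0} (the strong isomorphism property) applies directly to the isomorphism $\psi|_{P(S_\alpha)}$ and tells us that its restriction to the singleton sets is a bijection from $S_\alpha$ onto $S'_{\theta(\alpha)}$; under the standing identification of $s$ with $\{s\}$, this says exactly that $\psi|_{S_\alpha} \colon S_\alpha \to S'_{\theta(\alpha)}$ is a bijection. Since $\psi$ is a semigroup homomorphism and multiplication of singletons is just multiplication in the underlying semigroup, the restriction is in fact an isomorphism.

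For the second assertion, I would simply note that if $D_a \in \mathcal{CS}_0$ for some $a \in S$, then $D_a = S_\alpha$ for the unique $\alpha \in Y$ with $a \in S_\alpha$, and the first part yields $\psi(a) \in S'_{\theta(\alpha)}$, so $\psi(a)$ is a singleton. I expect no genuine obstacle; the only delicate point is the verification that $S'_{\theta(\alpha)} \in \mathcal{CS}_0$, and this reduces to transporting the failure of the left/right zero identities through the already-established isomorphism of power semigroups.
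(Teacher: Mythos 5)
Your proof is correct and takes essentially the same route as the paper, which derives the proposition directly from Proposition~\ref{prop:cs0} and Theorem~\ref{thm:theta} without further comment. Your explicit verification that $S'_{\theta(\alpha)}\in\mathcal{CS}_0$ (by transporting the left/right zero identity through the isomorphism of power semigroups) fills in a detail the paper leaves implicit, and it is indeed needed before the strong isomorphism property can be invoked.
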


In the following of this section, we focus only on the $\mathscr{D}$-classes that are either left or right zero semigroups, we show that even if $S_\alpha$ is a left or a right zero semigroup, we can guarantee that, for some particular cases of $a \in S_\alpha$, $\psi(a)$ is a singleton.

Recall first that there is a natural partial order $\leq$ on  $S$ defined as follows:
\[
	a\leq b \text{ if }  a=eb=bf \text{ for some }  e,f \in E(S),
\]
and it is trivial on a completely simple semigroup (see \cite{petrichreilly}).

\begin{prop}\label{prop:a221}
Let $\{a,b\} \in \mathcal{A}_2(S)$ where $a \in S_\alpha$, $b \in S_\beta$ and $\alpha < \beta$. Then $\psi(\{a,b\})=\psi(a)\cup \psi(b)$ and $\psi(a)$ is a singleton.
\end{prop}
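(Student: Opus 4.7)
The plan is to first show $\psi(\{a,b\}) = \psi(a) \cup \psi(b)$ and then deploy a ``pruning'' argument to force $\psi(a)$ to be a singleton. By Lemma \ref{lem:a2}, $a^2=a$, $b^2=b$, and $ab=ba=a$, and by Theorem \ref{thm:a24}, $C:=\psi(\{a,b\}) \in \mathcal{A}_2(S')$, so $C$ is breakable and $C = \bigcup_{\gamma'\in \id C} C_{\gamma'}$. Applying $\psi$ to $\{a,b\}\{a\}\{a,b\}=\{a\}$ yields $C\psi(a)C=\psi(a)$, so $\psi(a) \subseteq CC = C$; combined with $\id\psi(a) = \{\theta(\alpha)\}$ and Proposition \ref{prop:a25}(i) applied to $\psi(a) \leq C$ in $EP(S')$ (which follows from $\{a\}\{a,b\}=\{a,b\}\{a\}=\{a\}$), this gives $C_{\theta(\alpha)} = \psi(a)$.

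Next I would pin down $\id C = \{\theta(\alpha), \theta(\beta)\}$. For each $\gamma' \in \id C$, the breakable identity $CC_{\gamma'} = C_{\leq\gamma'}$ pulls back under $\psi^{-1}$ to $\{a,b\}\cdot\psi^{-1}(C_{\gamma'}) \in \mathcal{A}_2(S)$, whose $\id$-set (via Theorem \ref{thm:theta}) is $\{\theta(\alpha), \gamma'\}$; hence no element of $\id C$ strictly separates $\theta(\alpha)$ from $\gamma'$, which together with $\id C\ne\{\theta(\alpha)\}$ (else $\{a,b\}\subseteq S_\alpha$) forces $\id C = \{\theta(\alpha), \omega'\}$. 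Setting $A:=\psi^{-1}(C_{\omega'})$ and projecting $\{a,b\}A = \{a,b\}$ (from $CC_{\omega'}=C$) onto $S_\beta$ requires $b \in bA \subseteq S_{\theta^{-1}(\omega')}$, forcing $\omega'=\theta(\beta)$. Now put $B_0 := \psi^{-1}(C_{\theta(\beta)}) \subseteq S_\beta$; from $\{a,b\}B_0 = B_0\{a,b\} = \{a,b\}$ we get $bx=xb=b$ for every $x \in B_0$, so $x \mathscr{H} b$ in $S_\beta$. Since $B_0$ is breakable (hence consists of idempotents) and $H_b(S)$ is a group whose sole idempotent is $b$, we conclude $B_0 = \{b\}$, and therefore $\psi(\{a,b\}) = C_{\theta(\alpha)} \cup C_{\theta(\beta)} = \psi(a)\cup\psi(b)$.

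For the singleton claim, fix an arbitrary $x \in \psi(a)$ and define $C' := \{x\} \cup \psi(b)$. Breakability of $C$ across the levels $\theta(\alpha)<\theta(\beta)$ gives $xv = vx = x$ for all $v \in \psi(b)$, from which one verifies $C' \in \mathcal{A}_2(S')$. Let $V := \psi^{-1}(C') \in \mathcal{A}_2(S)$, which has $\id V = \{\alpha, \beta\}$ and components $V_\alpha \subseteq S_\alpha$, $V_\beta \subseteq S_\beta$. A direct calculation yields $CC' = C'C = C$, hence $\{a,b\}V = V\{a,b\} = \{a,b\}$; projecting onto $S_\alpha$ and $S_\beta$ gives $au=ua=bu=ub=a$ for all $u\in V_\alpha$ and $av=va=a$, $bv=vb=b$ for all $v\in V_\beta$. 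Thus $V_\alpha \subseteq H_a(S)$ and $V_\beta \subseteq H_b(S)$, and since $V_\alpha, V_\beta$ are breakable subsemigroups consisting of idempotents inside groups, $V_\alpha = \{a\}$ and $V_\beta = \{b\}$. Hence $V = \{a,b\}$, so $\psi(V) = \psi(\{a,b\}) = C$; but $\psi(V) = C' = \{x\} \cup \psi(b)$, which forces $C = C'$ and therefore $\psi(a) = \{x\}$.

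The main obstacle is the pruning construction in the last paragraph: recognizing that collapsing $\psi(a)$ down to any one element $\{x\}$ yields a breakable subset $C'$ whose $\psi$-preimage $V$ is still forced to equal $\{a,b\}$. The delicate structural point is that each breakable component $V_\alpha, V_\beta$, being a left or right zero subsemigroup of idempotents landing inside the group $H_a(S)$ or $H_b(S)$, must collapse to the unique idempotent (namely $a$ or $b$), which then pins down $C=C'$ and forces $\psi(a) = \{x\}$.
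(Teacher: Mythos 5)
Your overall architecture (pin down the two levels of $C=\psi(\{a,b\})$, identify the top component with $\psi(b)$, then prune the bottom component to a single point) is close in spirit to the paper's, but two of your load-bearing steps do not hold as written. First, the deduction ``$C\psi(a)C=\psi(a)$, so $\psi(a)\subseteq CC=C$'' is a non sequitur: an element of $C\psi(a)C$ has the form $c_1yc_2$ with $y\in\psi(a)$, and nothing places it in $CC$; nor does the identity $\psi(a)=C\psi(a)C$ give $\psi(a)\subseteq C$. Proposition \ref{prop:a25}(i) (applied with $A=\psi(a)$, $B=C$) only yields $C_{\theta(\alpha)}\subseteq\psi(a)$; the reverse inclusion $\psi(a)\subseteq C$ is essentially half of the statement being proved, and the paper obtains it only through the separate, nontrivial verification that $\psi^{-1}(\psi(a)\cup\psi(b))=\{a,b\}$. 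This gap is fatal to your final paragraph: your pruning construction needs $x\in C$ in order to invoke breakability of $C$ (to get $xv=vx=x$ for all $v\in\psi(b)$), so at best it shows that $C_{\theta(\alpha)}$ is a singleton, not that $\psi(a)$ is.

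Second, your determinations of $\id C=\{\theta(\alpha),\theta(\beta)\}$ and of $\id V=\{\alpha,\beta\}$ tacitly assume $\id\psi^{-1}(X)=\theta^{-1}(\id X)$ for sets $X$ meeting several components, which is exactly what Theorem \ref{thm:theta} does \emph{not} provide; a priori $C$ or $V$ could contain elements at levels strictly between the two endpoints, and your phrase ``no element of $\id C$ strictly separates $\theta(\alpha)$ from $\gamma'$'' is not backed by any cited fact. The paper excludes intermediate levels with the relation $\twoheadrightarrow$ and Proposition \ref{prop:a26}: from $\{a,b\}\twoheadrightarrow\{b\}$ one gets $C\twoheadrightarrow\psi(b)$, hence $\psi(b)\subseteq C$ and $C_{\theta(\beta)}=\psi(b)$ by Proposition \ref{prop:a25}(ii), and then $C\setminus\{s\}=\psi(b)$ for any $s\in C\setminus\psi(b)$, so that $C\setminus\psi(b)$ is a single point whose level is then forced to be $\theta(\alpha)$; for the preimage of $\psi(a)\cup\psi(b)$ it runs the auxiliary $\{b,c\}$ argument. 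None of this machinery appears in your proposal, and I do not see how to complete your route without it.
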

\begin{proof}
By Theorems \ref{thm:a24} and \ref{thm:a27}, $\psi(\{a,b\}) \in \mathcal{A}_2(S)$ with $|\id\psi(\{a,b\})|\geq 2$.
It is obvious that  $\{a\}\leq\{a,b\}\leq\{b\}$. Then $\psi(a)\leq \psi(\{a,b\}) \leq\psi(b)$. From this we deduce that $\theta(\alpha)\leq \alpha' \leq \theta(\beta)$ for all $\alpha' \in \id\psi(\{a,b\})$ and that $\psi(a)\cup\psi(b)$, $\psi(b)\cup \psi(\{a,b\}) \in EP(S')$.

By Proposition \ref{prop:a26}, we have $\{a,b\} \twoheadrightarrow \{b\}$. Hence $ \psi(\{a,b\}) \twoheadrightarrow \psi(b)$.
But it is easy to verify that $\psi(\{a,b\})\leq \psi(b)\cup\psi(\{a,b\})<\psi(b)$.
It follows   that $\psi(\{a,b\})=\psi(b)\cup \psi(\{a,b\})$. Thus $\psi(b)\subseteq \psi(\{a,b\})$ and that $\theta(\beta)\in \id\psi(\{a,b\})$ is maximal in $\id\psi(\{a,b\})$.
Moreover, by Proposition \ref{prop:a25}, we have $(\psi(\{a,b\}))_{\theta(\beta)}=\psi(b)$.

Let $s \in \psi(\{a,b\})\setminus \psi(b)$. By Proposition \ref{prop:a26}, $\psi(\{a,b\})\setminus \{s\} \in \mathcal{A}_2(S')$ and
\[
	\psi(\{a,b\})< \psi(\{a,b\})\setminus\{s\}\leq \psi(b).
\]
Then $\psi(\{a,b\})\setminus\{s\}=\psi(b)$. Therefore $\psi(\{a,b\})\setminus\psi(b)=\{s\}$. Suppose $s \in S'_{\alpha'}$ where $\alpha' <\theta(\beta)$. Clearly,
 $\{s\}\leq \psi(\{a,b\})$ and hence $\psi^{-1}(s)\leq \{a,b\}$. This implies that $\theta^{-1}(\alpha')\leq \alpha$, since
$\{\theta^{-1}(\alpha')\}=\id\psi^{-1}(s)$ and $\id\{a,b\}=\{\alpha,\beta\}$. Thus $\alpha' \leq \theta(\alpha)$. Therefore  $\alpha'=\theta(\alpha)$ and so,
\begin{align}\label{eqn:a222}
	\id\psi(\{a,b\})=\{\theta(\alpha), \theta(\beta)\},\quad (\psi(\{a,b\}))_{\theta(\alpha)}=\{s\}.
\end{align}

In the following, we show that $\psi(\{a,b\})=\psi(a) \cup \psi(b)$.
Since $\psi(a) \cup \psi(b) \in EP(S')$ and $\psi(a) \leq \psi(a) \cup \psi(b) \leq \psi(b)$, we have $\psi^{-1}(\psi(a) \cup \psi(b)) \in EP(S)$
and
\begin{align}\label{eqn:a223}
	\{a\} \leq \psi^{-1}(\psi(a) \cup \psi(b)) \leq \{b\},
\end{align}
from which we deduce that  $\alpha\leq \gamma \leq \beta$ for all $\gamma\in \id\psi^{-1}(\psi(a) \cup \psi(b))$.

Let $\gamma \in \id \psi^{-1}(\psi(a) \cup \psi(b))$ and $c \in (\psi^{-1}(\psi(a) \cup \psi(b)))_{\gamma}$. Then by \eqref{eqn:a223} we have $ac=ca=a$ and $c=c_1b=bc_2$ for some $c_1, c_2 \in \psi^{-1}(\psi(a) \cup \psi(b))$.  Since $b$ is idempotent, $
c=bc=cb $
 and then $ c^2=(cb)(bc)=cb^2c=cbc=c$.
Hence $\{b, c\} \in \mathcal{A}_2(S)$. It is a routine matter to verify  that $\psi^{-1}(\psi(a) \cup \psi(b)) \leq  \{b,c\}$ and so $\psi(a) \cup \psi(b) \leq \psi(\{b,c\})$. Therefore,
\begin{align}\label{eqn:a224}
	\psi(a) \cup \psi(b)=\psi(\{b,c\})(\psi(a)\cup \psi(b))=\psi(a) \cup \psi(\{b,c\}).
\end{align}
It follows that $\psi(\{b,c\})\subseteq \psi(a)\cup \psi(b)$. By \eqref{eqn:a222}, we have
\[
	\id\psi(\{b,c\})=\{\theta(\beta),\theta(\gamma)\} \subseteq \id(\psi(a) \cup \psi(b))=\{\theta(\alpha),\theta(\beta)\}.
\]
This implies $\theta(\gamma)=\theta(\alpha)$ or $\theta(\gamma) = \theta(\beta)$ and so, $\gamma =\alpha$ or $\gamma = \beta$. Thus
\[
	\id \psi^{-1}(\psi(a)\cup \psi(b))=\{\alpha,\beta\}.
\]

If $c \in (\psi^{-1}(\psi(a) \cup \psi(b)))_\beta$, then by \eqref{eqn:a224} $\psi(\{b,c\})=\psi(b)$ and so $b=c$.  Also,
by Proposition \ref{prop:a25}, we have  $   (\psi^{-1}(\psi(a) \cup \psi(b)))_\alpha=\{a\}$, since $\{a\} \leq \psi^{-1}(\psi(a) \cup \psi(b))$.
Thus $ \psi^{-1}(\psi(a)\cup \psi(b)) =\{a,b\} $ and then $\psi(\{a,b\}) = \psi(a) \cup \psi(b)$. By \eqref{eqn:a222} we know that  $\psi(a)$ is a singleton.
\end{proof}

\begin{prop}\label{prop:nonmax}
If $S_\alpha$ is a left or right zero semigroup and $a \in S_\alpha$ is not a maximal element of $S$ for the natural partial order, then $\psi(a)$ is a singleton.
\end{prop}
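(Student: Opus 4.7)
The plan is to reduce the statement to Proposition~\ref{prop:a221} by exhibiting a suitable idempotent above $a$ in a strictly larger $\mathscr{D}$-class. Since $a$ is not maximal, pick $b \in S$ with $a < b$, so $a = eb = bf$ for some $e, f \in E(S)$.

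First I would pin down the $\mathscr{D}$-classes of $a$ and $b$. Let $b \in S_\beta$. Because the natural partial order is trivial on a completely simple semigroup, $a < b$ forces $a$ and $b$ to lie in distinct $\mathscr{D}$-classes, so $\alpha \neq \beta$. Writing $e \in S_\gamma$ and applying the projection $S \to Y$ to $a = eb$ gives $\alpha = \gamma\beta \leq \beta$, and combined with $\alpha \neq \beta$ this yields $\alpha < \beta$ in $Y$.

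Next I would replace $b$ by the idempotent $b^0 \in H_b \subseteq S_\beta$. Using $b^0 b = b b^0 = b$, I compute
\[
  a b^0 = (e b) b^0 = e(bb^0) = eb = a, \qquad b^0 a = b^0 (b f) = (b^0 b) f = b f = a.
\]
Since $S_\alpha$ is a left or right zero semigroup, every element of $S_\alpha$ is idempotent; in particular $a^2 = a$. Together with $(b^0)^2 = b^0$ and the two equalities above, every product of elements of $\{a, b^0\}$ lies in $\{a, b^0\}$, so by Lemma~\ref{lem:a2} we have $\{a, b^0\} \in \mathcal{A}_2(S)$.

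Finally, with $a \in S_\alpha$, $b^0 \in S_\beta$, and $\alpha < \beta$, Proposition~\ref{prop:a221} applies to $\{a, b^0\}$ and yields at once that $\psi(a)$ is a singleton, which is the desired conclusion. There is no real obstacle here beyond correctly invoking the triviality of the natural partial order on completely simple components and the idempotency of elements in a one-sided zero semigroup; once $\{a, b^0\}$ is placed in $\mathcal{A}_2(S)$ across two distinct levels of $Y$, Proposition~\ref{prop:a221} does all the work.
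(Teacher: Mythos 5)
Your proposal is correct and follows essentially the same route as the paper: from non-maximality produce an idempotent above $a$ in a strictly higher component, observe that $\{a,b^0\}\in\mathcal{A}_2(S)$ with $\alpha<\beta$, and invoke Proposition~\ref{prop:a221}. You merely spell out details the paper leaves implicit (the triviality of the natural partial order on a completely simple component forcing $\alpha<\beta$, and the passage from $b$ to $b^0$), all of which check out.
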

\begin{proof}
Since $a$ is not  maximal, there is an idempotent $b \in S_\beta$ such that $a\leq b$ where $\alpha<\beta$. That is, $\{a,b\} \in A_2(S)$.
It follows from Proposition \ref{prop:a221} that $\psi(a)$ is a singleton.
\end{proof}
In the remainder of this section we give some properties of the image of a singleton under $\psi$, which will be of use later.

\begin{lem}[{\cite[Corollary 2.6]{yzg}}] \label{lem:ope}
Let $ \alpha, \beta \in Y $ with $ \alpha > \beta $ and let $ a \in S_\alpha $, $B\in P(S_\beta)$. Then
$\psi^{-1}(s)B\psi^{-1}(s)=aBa$ for all $s \in \psi(a)$.
\end{lem}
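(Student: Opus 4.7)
The plan is a case analysis on whether $\psi(a)$ is a singleton. If $D_a\in\mathcal{CS}_0$ (Proposition \ref{prop:cs01}) or $a$ is not maximal in $S$ for the natural partial order (Proposition \ref{prop:nonmax}), then $\psi(a)$ is a singleton, hence $\psi^{-1}(\{s\})=\{a\}$, and both sides of the desired equality trivially coincide with $aBa$. So we may assume $D_a=S_\alpha$ is a left- or right-zero semigroup and $a$ is maximal in $S$; by left--right duality, assume $S_\alpha$ is left zero. By Theorem \ref{thm:theta}, $\psi|_{P(S_\alpha)}$ is an isomorphism onto $P(S'_{\theta(\alpha)})$, and since the power semigroup $P(L)$ is left-zero if and only if $L$ is, $S'_{\theta(\alpha)}$ is also left-zero. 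In particular, every element of $\psi(a)\subseteq S'_{\theta(\alpha)}$ is idempotent and equal to its own $\mathscr{H}$-class identity.

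Set $T:=\psi^{-1}(\{s\})\subseteq S_\alpha$. Applying $\psi$, the desired identity $TBT=aBa$ transfers to
\[
   \{s\}\psi(B)\{s\}=\psi(a)\psi(B)\psi(a)\quad\text{in }P(S').
\]
The inclusion $\subseteq$ is immediate from $\{s\}\subseteq\psi(a)$. For the reverse inclusion, given arbitrary $x,y\in\psi(a)$ and $c\in\psi(B)$, the task is to produce $c'\in\psi(B)$ with $xcy=sc's$. The plan here is to exploit Lemma \ref{lem:ss0} and its right-handed dual, which control the left- and right-actions of elements of $S'$ on $\psi(S)$ via the idempotent companions $x^0=x$ and $y^0=y$, and to combine this with the bijectivity of $\psi|_{P(S_\alpha)}$ together with the distributivity of set-multiplication over unions in $P(S')$.

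The main obstacle is the rigidity step: ruling out the possibility that two distinct elements of $\psi(a)$ could induce genuinely different left- or right-actions on $\psi(B)$. The plan is to argue by contradiction: if some $x_0\in\psi(a)$ and $c\in\psi(B)$ satisfied $x_0c\notin s\psi(B)s$, then $\psi$ would be forced to identify two distinct subsets of $S_\alpha$ through their products with $B$, contradicting the injectivity of $\psi$ on $P(S_\alpha)$ that Theorem \ref{thm:theta} guarantees. This is precisely the technical core of \cite[Corollary 2.6]{yzg}; because that argument relies only on the left-zero structure of $S_\alpha$, the maximality of $a$, and the semilattice-of-components description of $S$, it transposes from the normal-orthogroup setting of that paper to the present completely regular setting with only cosmetic modifications.
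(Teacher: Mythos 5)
Your reduction is fine as far as it goes: the singleton cases via Propositions \ref{prop:cs01} and \ref{prop:nonmax} are handled correctly (and legitimately, since neither of those depends on Lemma \ref{lem:ope}), the passage to the equivalent identity $\{s\}\psi(B)\{s\}=\psi(a)\psi(B)\psi(a)$ is the right reformulation, and the inclusion $\subseteq$ is indeed immediate. Bear in mind, though, that the paper itself gives no proof of this lemma at all --- it is imported wholesale from \cite[Corollary 2.6]{yzg} --- so the only question is whether your elaboration of the remaining core works. It does not, for two concrete reasons. First, Lemma \ref{lem:ss0} is vacuous in exactly the situation you have reduced to: once $S'_{\theta(\alpha)}$ is a left zero semigroup, every $x\in\psi(a)$ satisfies $x=x^0$, so the identity $x\psi(S)=x^0\psi(S)$ carries no information and cannot ``control the left- and right-actions'' of $x$ on $\psi(B)$. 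Second, the rigidity step as you describe it fails: the products $xcy$ with $x,y\in\psi(a)$ and $c\in\psi(B)$ lie in $S'_{\theta(\alpha)\theta(\beta)}=S'_{\theta(\beta)}$, a strictly lower component, and injectivity of $\psi$ on $P(S_\alpha)$ (equivalently on $P(S'_{\theta(\alpha)})$) says nothing about such products --- two distinct subsets of a left zero semigroup can perfectly well have equal products with a fixed $B\in P(S_\beta)$, so no contradiction with injectivity arises. The entire content of the lemma is precisely that every element of $X=\psi^{-1}(s)$ conjugates $B$ exactly as $a$ does; compare the computation in Proposition \ref{prop:rho1}, which needs the hypothesis $a_1ba_1=a_2ba_2$ and is false for arbitrary subsets of $S_\alpha$. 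That is the statement to be proved, and your argument ultimately just asserts it.

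Your closing appeal to \cite[Corollary 2.6]{yzg} ``transposing with only cosmetic modifications'' is in effect what the paper itself does by stating the lemma with that citation, so as a justification it is no worse than the paper's; but offered as a \emph{proof} it is circular, and the mechanism you interpose between the correct reduction and the citation is not a valid one. (A minor slip: \cite{yzg} is the idempotent-semigroup paper; the normal-orthogroup paper is \cite{zgy}.)
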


The following result is proved by Yu, Zhao and Gan \cite{yzg} in the context of idempotent semigroups, however, it continue to be valid for general completely regular semigroups.
\begin{prop}\label{prop:singleton}
Let $ \alpha, \beta \in Y $ with  $ \alpha > \beta $. Then
\begin{enumerate}
\item $ \psi(a) t \psi(a) $ is a singleton for every $ a \in S_\alpha $ and $ t \in S'_{\theta(\beta)}$;
\item $ s \psi(b)s $ is a singleton for every $ s \in S'_{\theta(\alpha)} $ and $ b \in S^{}_\beta$.
\end{enumerate}
\end{prop}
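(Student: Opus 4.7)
The plan is to derive both parts by appropriate uses of Lemma~\ref{lem:ope}, with part~(ii) additionally invoking Propositions~\ref{prop:cs01} and~\ref{prop:nonmax} to handle a residual $\psi$-image of a singleton.

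For part~(i), given $a \in S_\alpha$ and $t \in S'_{\theta(\beta)}$, I would set $B = \psi^{-1}(t) \in P(S_\beta)$ (valid by Theorem~\ref{thm:theta}) and fix any $s \in \psi(a)$. Lemma~\ref{lem:ope} then yields $\psi^{-1}(s)\, B\, \psi^{-1}(s) = aBa$; applying $\psi$ to both sides collapses the left side to $\{s\}\{t\}\{s\} = \{sts\}$ and the right side to $\psi(a)\, t\, \psi(a)$, producing $\psi(a)\, t\, \psi(a) = \{sts\}$, a singleton.

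For part~(ii), the analogous instance of Lemma~\ref{lem:ope} applied to the inverse isomorphism $\psi^{-1} : P(S') \to P(S)$, with $s \in S'_{\theta(\alpha)}$ in the role of $a$ and $\psi(b) \in P(S'_{\theta(\beta)})$ in the role of $B$, gives $\psi(c)\psi(b)\psi(c) = s\psi(b)s$ for every $c \in \psi^{-1}(s) \subseteq S_\alpha$, i.e.,
\[
s\, \psi(b)\, s = \psi(\{cbc\}), \qquad cbc \in S_\beta,
\]
so it suffices to prove $\psi(\{cbc\})$ is a singleton. If $S_\beta \in \mathcal{CS}_0$, this is immediate from Proposition~\ref{prop:cs01}. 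Otherwise $S_\beta$ is a left or right zero semigroup, hence a band, so $cbc$ is idempotent; the identities $c^0(cbc) = (cbc)c^0 = cbc$ (using $c^0 c = c c^0 = c$), together with the idempotence of both $cbc$ and $c^0$, show $cbc \leq c^0$ in the natural partial order on $S$. Since $cbc \in S_\beta$ and $c^0 \in S_\alpha$ with $\alpha > \beta$, $cbc \neq c^0$, so $cbc < c^0$ and $cbc$ is not maximal in $S$; Proposition~\ref{prop:nonmax} then yields the singleton.

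The main obstacle, and the reason part~(ii) is not a purely mechanical dual of part~(i), is that the reduction in part~(ii) leaves a $\psi$-image of a singleton rather than a literal singleton, and when $S_\beta$ is a left or right zero semigroup this image need not be a singleton in general. The saving observation is that the specific element $cbc$ is always strictly below $c^0$ in the natural partial order, so the non-maximality criterion of Proposition~\ref{prop:nonmax} exactly handles the case not covered by Proposition~\ref{prop:cs01}.
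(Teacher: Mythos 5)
Your proof is correct and takes essentially the same route as the paper: part (i) is a direct application of Lemma~\ref{lem:ope}, and part (ii) dualizes it to reduce to showing that $\psi(cbc)$ is a singleton, split according to whether $S_\beta\in\mathcal{CS}_0$ (Proposition~\ref{prop:cs01}) or $S_\beta$ is a left or right zero semigroup (Proposition~\ref{prop:nonmax}). Your explicit check that $cbc\le c^0$ strictly, hence $cbc$ is not maximal, supplies a detail the paper leaves implicit (and you correctly key the case split to $S_\beta$, the $\mathscr{D}$-class of $cbc$).
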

\begin{proof}
(i) Let $a \in S_\alpha$, $t \in S'_{\theta(\beta)}$ and $s \in \psi(a)$. Then $\psi^{-1}(t) \in P(S_\beta)$. By Lemma \ref{lem:ope}, \[ a\psi^{-1}(t)a=\psi^{-1}(s)\psi^{-1}(t)\psi^{-1}(s)=\psi^{-1}(sts).\]
Thus, $\psi(a)t\psi(a)=\{sts\}$ is a singleton.

(ii) Suppose $s \in S'_{\theta(\alpha)}$ and $b \in S_\beta$. Let $a \in \psi^{-1}(s)$.  Applying part (i) to $\psi^{-1}$, we have $\psi^{-1}(s)b\psi^{-1}(s)=\{aba\}$ is a singleton. Hence
$s\psi(b)s =\psi(aba)$.
If $S_\alpha \in \mathcal{CS}_0$, then by Proposition \ref{prop:cs01}, $\psi(aba)$ is a singleton. Otherwise, by Proposition \ref{prop:nonmax}, $\psi(aba)$ is a singleton, since $aba$ is not maximal in $S$.  Therefore, $s\psi(b)s $ is a singleton.
\end{proof}

\section{Global determinism of completely regular semigroups}

We have shown that if $ \psi: P(S) \to P(S') $ is an isomorphism from $ P(S) $ onto $ P(S') $, then there exists a semilattice isomorphism $ \theta: Y \to Y' $ such that the restriction $ \psi|_{P(S_\alpha)} $ of $ \psi $ to $ P(S_\alpha) $ is an isomorphism from $ P(S_\alpha) $ onto $ P(S'_{\theta(\alpha)}) $. Since the class of all completely simple semigroups is globally determined \cite{tamura1986}, we have $S_\alpha \cong S'_{\theta(\alpha)}$.

If $S_\alpha \in \mathcal{CS}_0$, then the restriction $\psi|_{S_\alpha}$ of $\psi$ to $S_\alpha$ is an isomorphism from $S_\alpha$ onto $S'_{\theta(\alpha)}$, otherwise, when $S_\alpha$ is a left or a right zero semigroup, $ \psi | _{S_\alpha}$ need not be so. However, we can show that there is an isomorphism from $S_\alpha$ onto $S'_{\theta(\alpha)}$ which close related to $ \psi $.

Let $S_\alpha$ be a left or a right zero semigroup. First,  we introduce the equivalence relation $\rho_\alpha$ on $S_\alpha$  defined as follows: $(a_1, a_2) \in \rho_\alpha$ if
\begin{enumerate}
\item $a_1$ and $a_2$ are both maximal elements for the natural partial order, or $a_1=a_2$ otherwise;
\item $a_1ba_1=a_2ba_2$ for all $b\in S_\beta$ such that $\alpha>\beta$; and
\item $ca_1c=ca_2c$ for all $c \in S_\gamma$ such that $\alpha<\gamma$.
\end{enumerate}

Since each equivalence relation on a left or a right zero semigroup is a congruence, $\rho_\alpha$ is a congruence on $S_\alpha$.

\begin{prop}\label{prop:rho1}
Let $S_\alpha$ be a left or a right zero semigroup and $a\in S_\alpha$, $ A \subseteq a \rho_\alpha $. Then
\begin{enumerate}
\item $A B A = a B a$ for all $B \in P(S_\beta)$ where $\alpha>\beta$;
\item $CA C = Ca C$ for all $C \in P(S_\gamma)$ where $\alpha <\gamma$.
\end{enumerate}
\end{prop}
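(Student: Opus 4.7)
The plan is to reduce both statements to the pointwise identity obtained by replacing an arbitrary $a_1 \in A$ by $a$ inside a product. Two ingredients are needed: the sandwich identities (ii) and (iii) in the definition of $\rho_\alpha$, and a simple absorption fact. The absorption fact is that when $S_\alpha$ is a left zero semigroup and $\gamma > \alpha$, one has $ac = a$ for every $a \in S_\alpha$ and $c \in S_\gamma$; this is because $ac \in S_\alpha$ forces $a \cdot (ac) = a$ by left-zero multiplication inside $S_\alpha$, while associativity together with $a^2 = a$ gives $a \cdot (ac) = ac$. Dually, $ca = a$ when $S_\alpha$ is right zero.

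For part (i), in the left zero case I would take $a_1, a_2 \in A$ and $b \in B$ and use $a_1 = a_1 a_2$ to compute
\[ a_1 b a_2 = a_1 a_2 b a_2 = a_1(a_2 b a_2) = a_1(aba) = a_1(a_1 b a_1) = a_1^2 b a_1 = a_1 b a_1 = aba, \]
the two applications of condition (ii) being $a_2 b a_2 = aba$ and $aba = a_1 b a_1$. The right zero case is the mirror argument, using $a_2 = a_1 a_2$ to bracket as $a_1 b a_2 = (a_1 b a_1) a_2 = (aba) a_2 = (a_2 b a_2) a_2 = a_2 b a_2 = aba$. Thus $a_1 b a_2 = aba$ holds for all $a_1, a_2 \in A$ and $b \in B$, giving $ABA \subseteq aBa$; the reverse inclusion is immediate from $aba = a_1 b a_1 \in ABA$ for any fixed $a_1 \in A$.

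For part (ii), the absorption lemma does most of the work. In the left zero case it gives $a_1 c_2 = a_1$ and $(c_1 a_1) c_1 = c_1 a_1$, so
\[ c_1 a_1 c_2 = c_1 a_1 = c_1 a_1 c_1 = c_1 a c_1 = c_1 a = c_1 a c_2, \]
where the middle equality is condition (iii) applied to the sandwich by $c_1$, and the last two equalities use the absorption identities $a c_1 = a = a c_2$. The right zero case is dual: one instead collapses $c_1 a_1 c_2 = a_1 c_2$ using $c_1 a_1 = a_1$, rewrites $a_1 c_2 = c_2 a_1 c_2 = c_2 a c_2 = a c_2$ via condition (iii) and absorption, and concludes $c_1 a_1 c_2 = a c_2 = c_1 a c_2$. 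Either way, the pointwise identity $c_1 a_1 c_2 = c_1 a c_2$ holds on all of $C \times A \times C$, and $CAC = CaC$ follows at once.

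No serious obstacle is expected: the proposition is a bookkeeping exercise combining associativity, the absorbing/idempotent behaviour of left and right zero $\mathscr{D}$-classes of a completely regular semigroup, and the two sandwich identities built into the definition of $\rho_\alpha$. Condition (i) of that definition (maximality) plays no role in these computations; it matters only for ensuring that the relation $\rho_\alpha$ behaves well in the later sections.
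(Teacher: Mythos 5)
Your proof is correct and follows essentially the same route as the paper's: a direct pointwise computation showing $a_1ba_2=aba$ and $c_1a_1c_2=c_1ac_2$ using the two sandwich identities in the definition of $\rho_\alpha$ together with the left/right zero structure of $S_\alpha$. The only difference is organizational — the paper avoids case-splitting by inserting $a_1=a_1a_2a_1$ (resp.\ $a_1=a_1c_1c_2a_1$) and regrouping, whereas you split into the left and right zero cases and isolate the absorption identity $ac=a$ (resp.\ $ca=a$) as a lemma, which is a perfectly sound and arguably more transparent presentation of the same argument.
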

\begin{proof}
(i) Let $b \in S_\beta$. Since $A \subseteq a \rho_\alpha $,  $a_1 b a_1 =a_2 b a_2=aba$ for all $a_1, a_2 \in A$. Thus
\[
	a_1ba_2=(a_1 a_2 a_1) b a_2 =a_1 (a_2 a_1 b a_2) = a_1 (a_1 a_1 b a_1) =a_1 b a_1 = a b a,
\]
since $ S_\alpha $ is a left or a right zero semigroup. Hence, for any  $B \in P(S_\beta)$, we have
\[
	 A B A =\{ a_1ba_2 \mid a_1,a_2 \in A, b \in B\}=\{a b a \mid b \in B\} = a B a.
\]

(ii) Let $c_1, c_2 \in S_\gamma$ and $a_1 \in A$. Since $S_\alpha$ is a left or a right zero semigroup and $a_1\rho_\alpha a$,
\[
	c_1a_1c_2=c_1(a_1c_1c_2a_1)c_2=(c_1a_1c_1)(c_2a_2c_2)=c_1ac_1c_2ac_2=c_1ac_2.
\]
Thus for any $C \in P(S_\gamma)$, we have
\[
	CAC=\{c_1a_1c_2 \mid c_1,c_2 \in C, a_1 \in A\}=\{c_1ac_2 \mid c_1,c_2 \in C\} =CaC.
\]
\end{proof}

\begin{prop}\label{prop:rho2}
Let $S_\alpha$ be a left or a right zero semigroup and $a_1, a_2 \in S_\alpha$.
If $a_1 \, \rho_\alpha \, a_2$, then $a_1b=a_2b$ and $ba_1=ba_2$ for all $b \in S_\beta$ where $\alpha>\beta$.
\end{prop}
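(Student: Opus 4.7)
The strategy is to extract from $a_1 \rho_\alpha a_2$ only what Proposition~\ref{prop:rho1} already packages and then run a short computation inside the completely simple semigroup $S_\beta$. Applied with $A = \{a_1, a_2\} \subseteq a_1 \rho_\alpha$ and $B = \{b\}$, that proposition yields $ABA = \{aba\}$; in other words the four products $a_i b a_j$ ($i,j \in \{1,2\}$) collapse to a single element $z := aba$ of $S_\beta$. Once this is in hand, conditions (i) and (iii) in the definition of $\rho_\alpha$ play no further role, and the left-/right-zero dichotomy disappears from the rest of the argument.

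Setting $x = a_1 b$ and $y = a_2 b$, direct expansion gives
\[
x^2 = (a_1 b a_1)b = zb, \quad xy = (a_1 b a_2)b = zb, \quad yx = (a_2 b a_1)b = zb, \quad y^2 = (a_2 b a_2)b = zb,
\]
so $x^2 = xy = yx = y^2$ is a single element $w := zb$ of $S_\beta$. Since $S_\beta$ is completely simple, each of its $\mathscr{H}$-classes is a group, hence closed under multiplication; therefore $w = x^2 \in H_x(S_\beta)$ and $w = y^2 \in H_y(S_\beta)$. Two $\mathscr{H}$-classes sharing an element must coincide, so $H_x(S_\beta) = H_y(S_\beta)$. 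In this common group, the identity $xy = x^2$ cancels on the left to yield $y = x$, i.e.\ $a_1 b = a_2 b$.

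For the second conclusion $b a_1 = b a_2$, I set $u = b a_1$ and $v = b a_2$ and run the symmetric computation: $u^2 = uv = vu = v^2 = bz$, and the identical $\mathscr{H}$-class argument gives $u = v$.

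The only nontrivial idea — and the only thing I would flag as a potential obstacle — is noticing that the four identities $x^2 = xy = yx = y^2$ force $x$ and $y$ into a common group $\mathscr{H}$-class of $S_\beta$; after that, a one-line group cancellation closes the argument, with no use of conditions (i) or (iii) of $\rho_\alpha$ and no separate case analysis on whether $S_\alpha$ is left- or right-zero.
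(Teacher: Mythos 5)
Your proof is correct, and its skeleton is the same as the paper's: form the products $a_1b$ and $a_2b$, show they lie in a common group $\mathscr{H}$-class of $S_\beta$, and conclude equality from the group structure. The difference is at the final step, and it is in your favour. The paper's own proof only records $(a_1b)^2=(a_2b)^2$ (a direct consequence of condition (ii) in the definition of $\rho_\alpha$), deduces $a_1b\,\mathscr{H}\,a_2b$, and then asserts that $a_1b=a_2b$ ``since every $\mathscr{H}$-class is a group''; taken literally that inference is not valid, since distinct elements of a group can share the same square. What actually closes the argument is exactly the ingredient you supply: invoking Proposition \ref{prop:rho1} with $A=\{a_1,a_2\}$ to get that all four products $a_iba_j$ coincide, hence $xy=x^2$ with $x=a_1b$, $y=a_2b$ in a common group, and left cancellation gives $x=y$. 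So your write-up should be viewed as the repaired, complete version of the paper's proof rather than a different one; the observation that conditions (i) and (iii) of $\rho_\alpha$ and the left/right-zero dichotomy play no role here is also accurate.
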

\begin{proof}
 Suppose  $a_1 \, \rho_\alpha \, a_2$ and $b \in S_\beta$ where $\alpha>\beta$. Then $a_1ba_1=a_2ba_2$ and thus
 \[
 	(a_1b)^2=a_1ba_1b=a_2ba_2b=(a_2b)^2.
 \]
 Hence $a_1b\mathscr{H}a_2b$. Since every $\mathscr{H}$-class of $S$ is a group, it follows that $a_1b=a_2b$. A similar argument shows that $ba_1=ba_2$.
\end{proof}

\begin{prop}\label{prop:rho3}
Let $S_\alpha$ be a left or a right zero semigroup and $a \in S_\alpha $, $s\in \psi(a)$. Then for any $A \in P(S_\alpha)$,  $A \subseteq a \rho_\alpha$ if and only if $\psi(A) \subseteq s\rho_{\theta(\alpha)}$.
\end{prop}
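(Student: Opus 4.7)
The plan is to verify the three defining clauses of $\rho_{\theta(\alpha)}$ between an arbitrary $s_1 \in \psi(A)$ and $s$, giving the forward direction $A \subseteq a\rho_\alpha \Rightarrow \psi(A) \subseteq s\rho_{\theta(\alpha)}$; the reverse direction will be obtained by the analogous argument run in $P(S')$, picking $a_1 \in A$ and checking $a_1 \rho_\alpha a$.

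Clauses (ii) and (iii) of $\rho_{\theta(\alpha)}$ fall out uniformly from Propositions \ref{prop:rho1} and \ref{prop:singleton}. For (ii), given $t \in S'_{\beta'}$ with $\beta' < \theta(\alpha)$, I set $\beta = \theta^{-1}(\beta')$ and $B = \psi^{-1}(\{t\}) \in P(S_\beta)$; Proposition \ref{prop:rho1}(i) yields $ABA = aBa$, and passing through $\psi$ gives $\psi(A)\,t\,\psi(A) = \psi(a)\,t\,\psi(a)$. Proposition \ref{prop:singleton}(i) says this common set is a singleton $\{u\}$, so $s_1 t s_1 = u = sts$, since both $s_1 \in \psi(A)$ and $s \in \psi(a)$ feed into identical sandwich products. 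Clause (iii) is then the dual, obtained via Proposition \ref{prop:rho1}(ii) and Proposition \ref{prop:singleton}(ii).

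All the real work lies in clause (i): either $s_1 = s$ or both are maximal in $S'$. Suppose $s_1 \ne s$. If $a$ is not maximal in $S$, Proposition \ref{prop:nonmax} forces $\psi(a) = \{s\}$ while the definition of $\rho_\alpha$ gives $a\rho_\alpha = \{a\}$; then $A = \{a\}$, $\psi(A) = \{s\}$, and $s_1 = s$, a contradiction. Hence $a$ is maximal, and every element of $A$ is then maximal in $S$. It remains to show every $s_1 \in \psi(A)$ is maximal in $S'$.

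The main obstacle is this last maximality transfer. The intended strategy is by contradiction: were $s_1 \in \psi(A)$ non-maximal, pick an idempotent $e' \in S'_{\gamma'}$ with $\gamma' > \theta(\alpha)$ and $e' > s_1$; then $\{s_1, e'\} \in \mathcal{A}_2(S')$, and the dual of Proposition \ref{prop:a221} yields $\psi^{-1}(\{s_1\}) = \{a_1\}$ a singleton while $\psi^{-1}(\{s_1, e'\}) = \{a_1\} \cup E$ with $E = \psi^{-1}(\{e'\}) \in P(S_\gamma)$ is breakable by Theorem \ref{thm:a24}. Lemma \ref{lem:a2} then forces $a_1 e = e a_1 = a_1$ for every (idempotent) $e \in E$, so $a_1$ is not maximal in $S$. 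The most delicate step is to identify $a_1$ as a member of $A$, using the sandwich identities of Lemma \ref{lem:ope} together with Propositions \ref{prop:rho1} and \ref{prop:rho2} to pin $a_1$ down via its products with elements of higher and lower $\mathscr{D}$-classes; once $a_1 \in A$ is established, it contradicts $A \subseteq a\rho_\alpha$ consisting entirely of maximal elements, completing clause (i).
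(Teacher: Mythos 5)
Your treatment of clauses (ii) and (iii) of $\rho_{\theta(\alpha)}$ matches the paper's: Proposition \ref{prop:rho1} transfers the sandwich identities through $\psi$, and Proposition \ref{prop:singleton} collapses them to singletons, giving $s_1ts_1=sts$ and $ts_1t=tst$. The easy half of clause (i) (when $a$ is not maximal, so $A=\{a\}$ and $\psi(A)=\{s\}$ by Proposition \ref{prop:nonmax}) is also fine. The problem is the maximality-transfer step, which you yourself flag as ``the most delicate step'' and then do not carry out. From a non-maximal $s_1\in\psi(A)$ you correctly obtain, via the dual of Proposition \ref{prop:a221}, that $\psi^{-1}(s_1)=\{a_1\}$ is a singleton and that $a_1$ is not maximal in $S$. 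But the contradiction you want requires $a_1\in A$, and this does not follow from $s_1\in\psi(A)$: an isomorphism of power semigroups does not respect set inclusion or elementwise membership, so $\{s_1\}\subseteq\psi(A)$ gives no information about whether $\psi^{-1}(\{s_1\})\subseteq\psi^{-1}(\psi(A))=A$. The tools you invoke (Lemma \ref{lem:ope}, Propositions \ref{prop:rho1} and \ref{prop:rho2}) only let you show that $a_1$ satisfies the sandwich clauses (ii) and (iii) of $\rho_\alpha$ relative to $a$; they cannot place $a_1$ inside $A$, and without that there is no contradiction with the maximality of the elements of $A$.

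The paper avoids pulling back the single element $s_1$ altogether. It adjoins the dominating idempotent $t>s_0$ to the \emph{whole set}, setting $X=\psi(A)\cup\{t\}$, verifies the multiplicative identity $\psi(A)=X\psi(A)X$, and pulls \emph{that identity} back through $\psi^{-1}$ to get $A=\psi^{-1}(X)\,A\,\psi^{-1}(X)$ with $|\id\psi^{-1}(X)|\geq 2$. Choosing $b\in\psi^{-1}(X)$ lying in a component strictly above $\alpha$ then produces an element $bab$ which is manifestly a member of $A$ (it arises as a product landing in $A$) and satisfies $bab\leq b$, contradicting the maximality of every element of $A$. This is the idea your proposal is missing: the non-maximality must be transported back inside $A$ by a product identity between sets, not by inverting $\psi$ on an individual element.
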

\begin{proof}
Suppose that $A \subseteq a\rho_\alpha$ and $s' \in \psi(A)$.  Let  $t \in S'_{\beta'}$ where $\theta(\alpha)>\beta'$. By Proposition \ref{prop:rho1}(i), $A \psi^{-1}(t) A =a \psi^{-1}(t) a$ and so $\psi(A)t\psi(A)=\psi(a)t\psi(a)$. Thus, by Proposition \ref{prop:singleton}, we have $\psi(A)t\psi(A)=\psi(a)t\psi(a)=\{sts\}$.
Therefore $s'ts'=sts$ for all  $t\in S'_{\beta'}$.

Let $t \in S'_{\gamma'}$ where $\theta(\alpha)< \gamma'$. By Proposition \ref{prop:rho1}(ii), $\psi^{-1}(t)A\psi^{-1}(t) = \psi^{-1}(t)a\psi^{-1}(t)$. Hence,  $t\psi(A)t=t\psi(a)t$. It follows from Proposition \ref{prop:singleton} that $ t\psi(A)t=t\psi(a)t=\{tst\}$, which gives that $ts't=tst$ for all  $t \in S'_{\gamma'}$.

Now we suppose that $a$ is maximal in $S$ for the natural partial order. Then every element of $A$ is maximal. If there exists $s_0\in \psi(A)$ such that $s_0$ is not maximal in $S'$, then $s_0\leq t$ for some $t \in E(S'_{\gamma'})$ where $\theta(\alpha)<\gamma'$. Let $X = \psi(A) \cup \{t\}$. Then $|\id X| \geq 2$ and it is easy to see that  $\psi(A) = X\psi(A) X$. Thus $A =\psi^{-1}(X)A\psi^{-1}(X)$ where $|\id\psi^{-1}(X)|\geq 2$ with $\gamma \geq \alpha$ for all $\gamma \in \id\psi^{-1}(X)$. Take $a \in A$   and $b \in \psi^{-1}(X)\setminus (\psi^{-1}(X))_{\alpha}$. Then $bab \in A$ satisfies $bab \leq b$, in contradiction to the previously noted fact that every element of $A$ is maximal. Hence, every element of $\psi(A)$ is maximal in $S'$.

A dual argument establishes that if every element of $\psi(A)$ is maximal in $S'$, then every element of  $A$ is maximal in $S$.

Conversely, Suppose $\psi(A)\subseteq s\rho_{\theta(\alpha)}$ and let $a' \in A$, $b \in S_\beta$ where $\alpha>\beta$. Then by Proposition \ref{prop:rho1}, $ \psi(A)\psi(b)\psi(A)=s\psi(b)s $.  Since by Propositions \ref{prop:cs01} and \ref{prop:nonmax} we know that $\psi(aba)$ is a singleton, $s\psi(b)s=\psi(a)\psi(b)\psi(a)=\psi(aba)$. Thus $\psi(AbA)=\psi(aba)$ and so $AbA=aba$. We conclude that $a'ba'=aba$ for all $b \in S_\beta$.

Let $c\in S_{\gamma}$ with $\alpha <\gamma$. Then by Proposition \ref{prop:rho1}, $ \psi(c)\psi(A)\psi(c)=\psi(c)s\psi(c) $. Also, by Proposition \ref{prop:nonmax} we know that $\psi(cac)$ is a singleton. From which we deduce that $\psi(cAc)=\psi(c)s\psi(c)=\psi(cac)$. Hence  $cAc=\{cac\}$. Therefore  $ca'c=cac$ for all $c \in S_\gamma$.

To summarize, we have shown that  $A \subseteq a\rho_\alpha$ if and only if $\psi(A) \subseteq s\rho_{\theta(\alpha)}$.
%
\end{proof}

From Proposition \ref{prop:rho3}, we know that the restriction $\psi|_{P(a\rho_\alpha)}$ of $\psi$ to $P(a\rho_\alpha)$ is a bijection from $P(a\rho_\alpha)$ onto $P(s\rho_{\theta(\alpha)})$, where $a \in S_\alpha$, $s \in \psi(a)$. Thus $a\rho_\alpha$ and $s\rho_{\theta(\alpha)}$ have the same cardinality . Hence $a\rho_\alpha  \cong s\rho_{\theta(\alpha)}$ and so, there is an isomorphism $\xi_{\bar{a}}$ from $a\rho_\alpha$ onto $s\rho_{\theta(\alpha)}$, where $\bar{a}$ denotes the $\rho$-class $a\rho_\alpha$.  Moreover, from the definition of $\rho_\alpha$, we know that if  $ a $ is not maximal in $S$, then $\xi_{\bar{a}} =\psi(a)$. Now we  define  $\eta_\alpha \colon S_\alpha \to S'_{\theta(\alpha)}$ by
\begin{align}\label{eq:etaalpha}
	\eta_\alpha = \bigcup_{a \in S_\alpha} \xi_{\bar{a}}.
\end{align}
Then $\eta_\alpha$ is an isomorphism from $S_\alpha$ onto $S'_{\theta(\alpha)}$ such that $\eta_\alpha(a) \in s\rho_{\theta(\alpha)}$ for all $a \in S_\alpha$ where $s \in \psi(a)$. In particular, if $a$ is not maximal in $S$, then $\eta_\alpha(a) = \psi(a)$

\begin{thm}
 The class $ \mathcal{CR} $ of completely regular semigroups is globally determined.
\end{thm}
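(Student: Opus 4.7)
My plan is to define $\eta \colon S \to S'$ piecewise by $\eta|_{S_\alpha} = \eta_\alpha$ for every $\alpha \in Y$ (taking $\eta_\alpha = \psi|_{S_\alpha}$ when $S_\alpha \in \mathcal{CS}_0$ by Proposition~\ref{prop:cs01}, and $\eta_\alpha$ as in~\eqref{eq:etaalpha} when $S_\alpha$ is a left or right zero semigroup), and then verify that $\eta$ is a semigroup isomorphism from $S$ onto $S'$. Bijectivity is immediate: each $\eta_\alpha$ is a bijection $S_\alpha \to S'_{\theta(\alpha)}$, the $\mathscr{D}$-classes partition both $S$ and $S'$, and $\theta \colon Y \to Y'$ is itself bijective. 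So everything reduces to showing multiplicativity $\eta(ab) = \eta(a)\eta(b)$ for all $a \in S_\alpha$, $b \in S_\beta$.

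When $\alpha = \beta$ this is immediate from $\eta_\alpha$ being a semigroup homomorphism. When $\alpha \neq \beta$, set $c = ab \in S_{\alpha\beta}$, and note that $\alpha\beta$ lies strictly below $\alpha$ or $\beta$. My strategy is to show that $\psi(c)$ is a singleton $\{v\} \subseteq S'$ and to pin both $\eta(c)$ and $\eta(a)\eta(b)$ down to $v$.

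For the identification $\psi(c) = \{v\}$: if $S_{\alpha\beta} \in \mathcal{CS}_0$, invoke Proposition~\ref{prop:cs01} directly; otherwise $S_{\alpha\beta}$ is left or right zero, and I argue that $c$ is not maximal in $S$ under the natural partial order by using identities such as $a^0 c = c = c b^0$ to sit $c$ below an idempotent of a strictly higher $\mathscr{D}$-class, whence Proposition~\ref{prop:nonmax} supplies $\psi(c) = \{v\}$. The construction of $\eta_{\alpha\beta}$---under which $\xi_{\overline{c}}$ must coincide with $\psi$ on singleton $\rho$-classes---then forces $\eta(c) = v$. To obtain $\eta(a)\eta(b) = v$ I use Proposition~\ref{prop:rho3} combined with the definition of $\eta_\alpha$: the element $\eta(a)$ lies in the common $\rho_{\theta(\alpha)}$-class of $\psi(a)$, and analogously $\eta(b)$ lies in the common $\rho_{\theta(\beta)}$-class of $\psi(b)$. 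Invoking Proposition~\ref{prop:rho2} in $S'$, I substitute $\eta(a)$ by any $s \in \psi(a)$ and $\eta(b)$ by any $t \in \psi(b)$ within the product, yielding $\eta(a)\eta(b) = st \in \psi(a)\psi(b) = \{v\}$. When $\theta(\alpha)$ and $\theta(\beta)$ are not comparable in $Y'$, the substitution is routed through the strictly smaller component $S'_{\theta(\alpha\beta)}$ via the idempotent factorization $\eta(a)\eta(b) = \eta(a) \cdot \eta(a)^0 \eta(b)^0 \cdot \eta(b)$, whose middle factor already lies in $S'_{\theta(\alpha\beta)}$ so that Proposition~\ref{prop:rho2} becomes applicable on each side.

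The main obstacle I anticipate is verifying the non-maximality of $c = ab$ in $S$ when $S_{\alpha\beta}$ is left or right zero, combined with the careful bookkeeping of the $\rho$-substitution in the incomparable case; both rest on the combinatorial interplay between the natural partial order, the band $E(S)$ of idempotents, and the $\rho$-congruences developed in Section~5. Once these are in place, the singleton property $\psi(ab) = \{v\}$ together with the definition of $\eta$ on singleton $\rho$-classes bundles the argument into the desired homomorphism, completing the proof that $\mathcal{CR}$ is globally determined.
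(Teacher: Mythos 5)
Your construction of $\eta$ and the reduction to multiplicativity follow the paper's proof exactly; the divergence, and the problems, lie in how you verify $\eta(ab)=\eta(a)\eta(b)$ when $\alpha\neq\beta$. The first gap is your claim that $c=ab$ is never maximal when $S_{\alpha\beta}$ is a left or right zero semigroup. If $\alpha<\beta$ and $S_\alpha$ is a left zero semigroup, then $ab=a$ (indeed $a(ab)=a^{2}b=ab$ while $ax=a$ for every $x\in S_\alpha$), and $a$ can perfectly well be maximal for the natural partial order: the identities $a^{0}c=c=cb^{0}$ are one-sided, whereas $c\leq b^{0}$ requires $c=eb^{0}=b^{0}f$ with \emph{both} idempotents. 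Concretely, take $S=\{a_{1},a_{2}\}\cup\{b\}$ with $\{a_{1},a_{2}\}$ a left zero semigroup lying below the trivial group $\{b\}$, $a_{i}b=a_{i}$ and $ba_{i}=a_{1}$; then $a_{2}=a_{2}b$ is maximal, since $bf\in\{a_{1},b\}$ for every idempotent $f$. In such cases $\psi(ab)=\psi(a)$ need not be a singleton, so the strategy of pinning $\eta(ab)$ and $\eta(a)\eta(b)$ to a common $v\in\psi(ab)$ cannot even begin. These are precisely the cases the paper dispatches without any singleton argument, via $\eta(ab)=\eta(a)=\eta(a)\eta(b)$ because $S'_{\theta(\alpha)}$ is again a left zero semigroup; the dual failure occurs for $\alpha>\beta$ with $S_\beta$ a right zero semigroup, where $ab=b$.

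The second gap is the double substitution $\eta(a)\eta(b)=s\,\eta(b)=st$. Proposition~\ref{prop:rho2} only permits replacing a $\rho_{\gamma'}$-equivalent element when the other factor lies in a component \emph{strictly below} $\gamma'$; in the comparable case exactly one of your two replacements multiplies against a factor from a strictly higher component and is therefore not licensed. It can genuinely fail: with $S'_{\theta(\beta)}$ a right zero semigroup and $\theta(\beta)<\theta(\alpha)$ one gets $s\,\eta(b)=\eta(b)$ while $st=t$, and $\eta(b)$ need not equal $t$ when $b$ is maximal. The paper avoids this by first using the left or right zero law in the bottom component to rewrite the product as a two-sided sandwich ($\eta(a)\eta(b)=\eta(b)\eta(a)\eta(b)$, and dually), and then applying conditions (ii)--(iii) of the definition of $\rho_\alpha$ (equivalently Proposition~\ref{prop:rho1}) together with the singleton statement of Proposition~\ref{prop:singleton}. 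Your idempotent factorization in the incomparable case has a related defect: when $S_\alpha$ is a left or right zero semigroup, $\eta(a)^{0}=\eta(a)$, so the middle factor $\eta(a)^{0}\eta(b)^{0}$ is not independent of the elements being replaced and the substitution becomes circular. The overall plan is sound and matches the paper's, but the cross-component cases require the sandwich argument rather than Proposition~\ref{prop:rho2} alone, and the cases where $ab$ collapses to one of its factors must be treated separately, not through non-maximality.
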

\begin{proof}
 Assume that both $ S = [Y; S_\alpha ] $ and $ S' = [Y'; S'_{\alpha'}] $ are completely regular semigroups, and $ \psi: P(S) \to P(S') $ is an isomorphism. By Theorem \ref{thm:theta}, there is a semilattice isomorphism $ \theta: Y \to Y' $ such that for every $ \alpha \in Y $, the restriction $ \psi|_{P(S_\alpha)} $ of $ \psi $ to $ S_\alpha $ is an isomorphism from $ P(S_\alpha) $ onto $ P(S'_{\theta(\alpha)}) $. Since the class of completely simple semigroups is globally determined, we have $ S_\alpha \cong S'_{\theta(\alpha)} $. Thus if $|Y|=1$ then $|Y'|=1$ and $ S \cong S' $. In the following, we consider the case where $ | Y |, | Y' | \geq 2 $.

 For each $ \alpha \in Y $, define $ \eta_\alpha \colon S_\alpha \to S'_{\theta(\alpha)} $ as follows: if $S_\alpha$ is either a left or a right zero semigroup, then we define $\eta_\alpha$ as \eqref{eq:etaalpha}; otherwise, we define $ \eta_\alpha $ is equal to the restriction $\psi|_{S_\alpha}$ of $\psi$ to $S_\alpha$. Let $\eta = \bigcup_{\alpha \in Y} \eta_\alpha$. Then $\eta$ is a bijection from $S$ onto $S'$.
Now let $ a \in S_\alpha $ and $ b \in S_\beta $. We show that $ \eta(ab) =\eta(a) \eta(b) $. Clearly, it holds in the case of $ \alpha = \beta $.

Consider the following cases of $ \alpha $ and $ \beta $.

{ \it Case 1: $ \alpha < \beta $. } (i) If $S_\alpha \in \mathcal{CS}_0$,  then $\eta(a) = \eta_\alpha (a) = \psi(a)$ and
\[
	 \eta(ab) = \eta_{\alpha}(ab) = \psi(ab) = \psi(a) \psi(b).
\]
If  $S_\beta \in \mathcal{CS}_0$, then $\eta(b)=\eta_\beta(b) = \psi(b)$ and so $ \eta(ab) = \psi(a) \psi(b) = \eta(a) \eta(b)$. If $S_\beta$ is either a left or a right zero semigroup, then $\eta(b) = \eta_\beta(b) \in s\rho_{\theta(\beta)}$ where  $s \in \psi(b)$. By Proposition \ref{prop:rho2}, we have
\[
	\eta(a)\eta(b)=\eta(a)s=\psi(a)\psi(b)=\eta(ab).
\]

(ii) If $S_\alpha$ is a left zero semigroup, then $ab=a$ and $\eta(a)\eta(b)=\eta(a)$. Hence
\[
	\eta(ab) = \eta(a)=\eta(a)\eta(b).
\]

(iii) If $S_\alpha$ is a right zero semigroup, then $ab=bab<b$ and $\eta(a)=\eta_\alpha(a) \in s\rho_{\theta(\alpha)}$ where $s \in \psi(a)$. Hence, by the definition of $\eta_\alpha$,
\[
	\eta(ab)=\eta_\alpha(ab)=\psi(ab)=\psi(a)\psi(b).
\]
If $S_\beta \in \mathcal{CS}_0$, then $\eta(b)=\eta_\beta(b)=\psi(b)$. Hence,
\begin{align*}
	&\eta(a)\eta(b)=\eta(b)\eta(a)\eta(b)& &(\text{since } S'_{\theta(\alpha)} \text{ is a right zero semigroup})\\
	=&\eta(b)s\eta(b)=\eta(b)\psi(a)\eta(b)&&(\text{since } \eta(a) \in s\rho_{\theta(\alpha)} )\\
	=&\psi(b)\psi(a)\psi(b)=\psi(a)\psi(b)&&(\text{since } S'_{\theta(\alpha)} \text{ is a right zero semigroup})\\
	=&\eta(ab).&&
\end{align*}

If $S_\beta$ is either a left or  right zero semigroup, then $\eta(b)=\eta_\beta(b) \in t\rho_{\theta(\beta)}$ where $t \in \psi(b)$. Therefore,
\begin{align*}
	&\eta(a)\eta(b)=\eta(b)\eta(a)\eta(b)& &(\text{since } S'_{\theta(\alpha)} \text{ is a right zero semigroup})\\
	=&t\eta(a)t=\psi(b)\psi(a)\psi(b)&&(\text{since } \eta(b) \in t\rho_{\theta(\alpha)} )\\
	=&\psi(a)\psi(b)=\eta(ab).&&(\text{since } S'_{\theta(\alpha)} \text{ is a right zero semigroup})
\end{align*}
%

{\it Case 2: $ \alpha > \beta $.}  The dual of Case 1.

{\it Case 3: $ \alpha $ and $ \beta $ are incomparable. }
If $S_{\alpha\beta} \in\mathcal{CS}_0$, then $ \eta(ab) = \psi(ab) $. Otherwise, $ ab $ is not a maximal element of $S$, thus $ \eta(ab) = \eta_{\alpha\beta} (ab) = \psi(ab) $. Then, a similar argument of case 1 shows that  $\eta(ab)=\eta(a)\eta(b)$ in this case.
\end{proof}

\end{document}